\newcommand{\norm}[1]{\left\lVert#1\right\rVert}
\newcommand{\alert}[1]{\textit{\textcolor{blue}{***#1}}}
\begin{document}

\title{Optimal pricing for electricity retailers based on data-driven consumers' price-response
}

\titlerunning{Optimal pricing for electricity retailers}        

\author{Román Pérez-Santalla \and Miguel Carrión \and Carlos Ruiz
}


\institute{
R. Pérez-Santalla \at University Carlos III de Madrid, Avda. de la Universidad, 30,	28911-Legan\'es, Spain\\
\email{100420252@alumnos.uc3m.es}
\and
M. Carrión \at Escuela de Ingenier\'ia Industrial y Aeroespacial de Toledo, University of Castilla-La Mancha, Avda. Carlos III, 45071, Toledo, Spain. \\
\email{miguel.carrion@uclm.es}
\and
C. Ruiz \at
Department of Statistics \& UC3M-BS Institute for Financial Big Data (IFiBiD), University Carlos III de Madrid, Avda. de la Universidad, 30,	28911-Legan\'es, Spain\\
              \email{carlos.ruiz@uc3m.es}           
}


\maketitle

\begin{abstract}
In the present work we tackle the problem of finding the optimal price tariff to be set by a risk-averse electric retailer participating in the pool and whose customers are price-sensitive. We assume that the retailer has access to a sufficiently large smart-meter dataset from which it can statistically characterize the relationship between the tariff price and the demand load of its clients. Three different models are analyzed to predict the aggregated load as a function of the electricity prices and other parameters, as humidity or temperature. More specifically, we train linear regression (predictive) models to forecast the resulting demand load as a function of the retail price. Then we will insert this model in a quadratic optimization problem which evaluates the optimal price to be offered. This optimization problem accounts for different sources of uncertainty including consumer's response, pool prices and renewable source availability, and relies on a stochastic and risk-averse formulation. In particular, one important contribution of this work is to base the scenario generation and reduction procedure on the statistical properties of the resulting predictive model. This allows us to properly quantify (data-driven) not only the expected value but the level of uncertainty associated with the main problem parameters.
Moreover, we consider both standard forward based contracts and the recently introduced power purchase agreement contracts as risk-hedging tools for the retailer. The results are promising as profits are found for the retailer with highly competitive prices and some possible improvements are shown if richer datasets could be available in the future. A realistic case study and multiple sensitivity analyses have been performed to characterize the risk-aversion behavior of the retailer considering price-sensitive consumers. It has been assumed that the energy procurement of the retailer can be satisfied from the pool and different types of contracts. The obtained results reveal that the risk-aversion degree of the retailer strongly influences contracting decisions, whereas the price sensitiveness of consumers has a higher impact on the selling price offered.  
\keywords{Electricity retailer \and Price-sensitive consumers \and Risk aversion \and Smart meter data \and Stochastic programming \and Time-of-use rate}
\end{abstract}

\section{Introduction, state of the art and contributions}
\label{chap:intro}

Intermittent and renewable units are the generation technologies that are being most installed nowadays in  power systems \cite{IRENA_2021}. The capital costs of these units are moderately high, but they have the advantage of not burning fuel in the electricity production process, which results in low operation costs. As a consequence of this, electricity prices in the spot markets can be very low if the total power output of the renewable units is high. On the contrary, electricity prices can be elevated if the renewable production is low. These high prices can be even enlarged if extra charges are required to be paid by fossil-fuel based generators to penalize the emission of greenhouse gases. For instance, this is the case of generating power plants in Europe that are forced to participate in  the EU Emissions Trading System \cite{ETS_2021}, which prices have doubled during year 2021 reaching the 64.37 euro/ton on September 27th. As a result, the volatility of electricity prices has increased significantly during the last years in those power systems with high intermittent production, \cite{Dong_2019}. 

The effect of the volatility of electricity prices may be particularly negative for the economic objectives of electricity retailers. In a power system framework, retailers are entities that purchase electricity in different markets and through bilateral contracts to sell it afterwards to the end users of electricity. The financial revenue obtained by retailers comes from the sale of electricity to their clients, whereas the costs incurred by retailers are those derived from its procurement of electricity. Then, if the purchase price in the electricity markets is higher than expected, the acquisition cost of electricity for retailers will be also high and they may even suffer from losses. In the same manner, this situation may also happen if the selling price offered by retailers to their clients is too low. In this case, the revenue obtained from selling energy could be smaller than the procurement cost of the retailer. 
Moreover, with the progressive integration of smart grid technologies within distribution systems, we may expect a increasing price-demand elasticity of consumers, which needs to be accurately characterized.
As a consequence of this, the appropriate determination of the electricity procurement strategy and the selling price offered to their clients by retailers is key for achieving  profitability.

In this work we investigate the decisions (energy procurement and selling price determination strategies) to be taken by a retailer to maximize its expected profit within a time of use (ToU) tariff. A ToU rate is that one which the selling price offered to clients vary for each trading period of the day.  The idea behind the ToU tariff is to communicate to clients the price offered so that they can choose to use more or less electricity within the time period that the offer is valid. We set this investigation in the British electricity market \cite{GBmarket}. For that we use real world smart-meters consumption dataset which spans almost the whole year of 2013 \cite{SmartMeter}. The dataset is comprised of half-hourly electric demand and price data for 1000 British households, along with some other related variables such as apparent temperature, humidity or precipitation. 
Once the response of consumers to electricity prices is characterized and modeled through a predictive model, a  risk-averse two-stage stochastic programming model is formulated to determine the energy procurement and selling price by retailers. Different energy procurement sources are considered in this problem.

As more countries are installing smart meters to record household consumption \cite{AKHAVANHEJAZI201891} retailers have now more information than ever to cater prices to consumers. Some previous works have approached this price design by clustering customers with similar behaviours to offer a similar tariff \cite{articleyang}. Another approach taken by \cite{6266720} is to formulate a game between the retailer and different types of consumers (such as industrial and households). Reference \cite{Feng_2020} models the retailer-consumer interaction as Stackelberg competition (retailer as leader and customers as followers) to produce a bilevel optimization problem. 

Many works have been also developed to formulate the different problems faced by electricity retailers under uncertainty. Some of the most relevant models derived in the last years are described below.
In this sense, reference \cite{Carrion_2007} formulated for the first time a two-stage stochastic programming problem to decide the purchasing strategy and the selling price offered by an electricity retailer in a medium-term planning horizon. This work was expanded in \cite{Carrion_2008}, where a bilevel problem was formulated to take into account explicitly the decision-making problem faced by electricity end users that desire to select optimally their electricity suppliers.
Additionally, reference \cite{Hatami_2011} incorporated time-of-use rates in the problem formulated in  \cite{Carrion_2007} modelling also the elasticity of the demand. 
The authors of \cite{Kettunen_2010} developed a novel multi-stage stochastic optimization approach to decide the contract portfolio of retailers. The numerical results obtained using this procedure indicate that the correlation between price and load has a direct influence on the decisions of the retailer.  %
In Reference \cite{Garcia-Bertrand_2013} it is proposed a novel procedure that considers selling price modifications to incentive load shifts intended to increase the expected profit of the retailer. It is observed that this new price scheme is able to increase the profit of retailers while the payments of the consumers are reduced.
The authors of \cite{Nojavan_2017} formulate the bilateral contracting and selling price determination of a retailer in a smart grid framework considering distributed generation, energy storage systems and a demand response program. In the same manner that \cite{Garcia-Bertrand_2013}, this work concludes that the demand response program is able to increase the expected profit of the retailer and decrease the selling price offered to the clients.
In Reference \cite{Deng_2020} a procedure for designing real-time pricing rates is proposed for different types of consumers. The novelty of this procedure is the usage of the downside risk constraints method to decide the risk strategy of the retailer. The numerical results indicate that the risk faced by the retailer may be efficiently controlled by using this method.
Finally, the recent work \cite{Feng_2020} derives a data-driven approach to design time-of-use tariffs by modeling a Stackelberg game between the retailer and strategic consumers. The obtained tariffs are able to shave demand peaks and fill demand valleys.

What we propose in this work is to first identify the sensitivity of customers by using historical data to model customer's responses to prices (exhibited by the demand). Customer elasticity has become increasingly important as it can help in smoothing the peaks of consumption which have become ever increasing as electric demand grows and renewable sources play a bigger role in the system. Furthermore, new technologies on the electric grid have allowed to make this real time communication viable \cite{Electricgrid}.
 To measure the sensitivity of customers to the price offered we will take the coefficient of a linear regression model corresponding to the price variable. Several models of different size and with different variables are analyzed to figure out which variables better improve the prediction accuracy. Among the most meaningful variables were lagged values of the electric demand and temperature data \cite{Hyndmanonline}. After selecting 3 main models of different size (one with 8 predictors, another with 30 and finally a complex model which used a different linear model for each time period to be predicted \cite{Hyndman12}) comparisons are made to find the most suitable for the present work. These comparisons focused not only in the accuracy of the model but also on the impact within the model of the price predictor. Moreover, we seek to characterize statistically  the distributions of the estimated model parameters. This information will be latter used to generate a representative set scenarios for the stochastic programming problem. \par

The next step is to formulate the profit maximization problem faced by retailers in the British electricity market. Like most European markets, the British electricity market is mainly based on the interactions through different trading floors between producers (energy generating plants of various types) who sell the energy to large consumers or to retailers,  who sell the energy to the final consumers (individuals or companies who purchase electric energy for consumption). In this work we have established 3 market mechanisms for the retailer to purchase energy. First, the retailer may purchase energy from the pool, which is a market place  where producers, retailers and consumers trade electricity with a typical planning horizon of one day divided in hourly or half-hourly periods. We consider the pool to be an unlimited source of energy (there is no cap to the amount of energy that can be purchased from the pool) which is a fair approximation as most producers in Britain participate in this market. The second way in which the retailer may purchase energy is through a forward base contract which is a bilateral agreement between a producer and a retailer where a certain amount of energy is purchased during a certain time frame for the agreed price. In this type of contract, the producer must deliver the amount of energy contracted, so it is normally linked to non-renewable sources where the power producer can always deliver the promised energy (dispatchable units). Finally, the third option we have modeled for the retailer to purchase energy from is a power purchase agreement (PPA) which is a different type of bilateral contract where the power delivered in each trading period is not necessarily fixed, as it can depend on other factors, such as weather for renewable sources (non-dispatchable) \cite{libro}. An example of PPA may be a bilateral contract signed with a solar photovoltaic power plant, which production is different in each hour of the day. 
Considering these three trading floors, a risk-averse two-stage stochastic programming problem is formulated to decide the energy procured by the retailer in each available trading floor and to determine the selling price offered to their clients. The stochastic variables considered in this model are: 1) pool prices, 2) energy-availability of the PPA and 3) response of end users of electricity to the prices offered by the retailer. The most important decisions to obtain using this formulation are the power contracted from forward and PPA contracts and the selling prices offered by the retailer to their clients. Considering a ToU rate, the selling prices offered by the retailer can be different for each half-hour period and they are indexed to pool prices. The risk-aversion of the retailer is modeled using the conditional value-at-risk (CVaR) of the profit distribution.


The main contributions of this work are fivefold:
\begin{enumerate}
    \item[1)] to use real smart meter data to quantify the price-load response of consumers in terms of predictive linear regression model.
    \item[2)] to use the statistical properties of above model and real world datasets to generate a meaningful set of scenarios of the uncertain parameters: price-sensitivity of consumers, renewable availability and pool prices.
    \item[3)] to consider both forward based contracts (expensive but certain availability) with the recently introduced PPA contracts (cheap but uncertain availability), as risk-hedging tools for a retailer participating in a pool market.
    \item[4)] to explicitly insert the forecasting model in 1) as a constraint in the two-stage stochastic problem faced by the retailer together with the scenarios derived in 2).
    \item[5)] to study through realistic simulations the main properties and interactions between PPA and forward based contract prices, and the impact of level of consumers elasticity on the retailer's optimal tariffs and profits.
\end{enumerate}
We believe that the data-driven methodology presented in this work is general and can be applied to other price-load smart-meter datasets. We hope that these will become more and more available in the near future with the progressive integration of smart-grid technologies.

\section{Price-demand data-driven characterization}
\label{chap:model}

As mentioned before, the first step in solving the optimization problem is to characterize, as accurately as possible, the demand-response level of price-sensitive consumers, and how this is impacted by other exogenous variables.

We propose a novel data-driven approach to i) infer the relationship between the consumer's demand and the retail price (tariff) and ii) insert it explicitly in the retailer's decision making model (stochastic optimization). 

Lets assume that we have a sample (data set) of $n$ observations of the type $S_N=\{(y^1,x_1^1,\dots,y^1_k),\dots,(y^n,x_1^n,\dots,y^n_k)\}$, where $y,x_j^i\in\mathbb{R}$, $i=1,\dots,n$, $j=1,\dots,k$. Lets also assume that both the response variable $y$ (demand) and the first of the explanatory variables, $x_1$ (price), are decision variables within our stochastic model, while the remaining variables $x_{-1}=\{x_2,\dots,x_k\}$ can be considered as contextual information (covariates), known at the time of decision making. Hence, we seek to characterize the relationship between them through a prediction model of the type $y=f(x_1|x_{-1})$ that can be inserted within the retailers optimization problem. In particular, we can consider the following linear model:
$$y=\beta_0 + \beta_1 x_1 + \sum_{j=2}^k\beta_j x_j$$
where $\beta_0,\beta_1,\dots,\beta_k$ are the linear regression coefficients to be estimated from the sample $S_N$ by Ordinary Least Squares (OLS) or any other statistical approach. Once estimated, the resulting model can be expressed as 
\begin{equation}\label{eq:regre_1}
    y=\hat{\beta}_1 x_1 + \hat{D}(x_{-1})
\end{equation}
where $\hat{D}(x_{-1})=\hat{\beta}_0 + \sum_{j=2}^k \hat{\beta}_j x_j + \epsilon$ is fully characterized by the realization of the contextual information $x_{-1}$, being $\epsilon$ the residual (error). 
If $y$ and $x_1$ are considered optimization variables, the data driven function (\ref{eq:regre_1}) can be easily incorporated in the optimization problem in the form of a linear constraint, and without increasing substantially the computational complexity of the resulting optimization problem.

Moreover, considering a linear model like  (\ref{eq:regre_1}) within an stochastic optimisation framework, presents two important advantages: 
\begin{itemize}
    \item[i)] the statistical distribution of the residuals $\epsilon$ and the estimated $\hat{\beta}_j$ for $j=0,\dots,k$ is well characterized, so that realistic data-driven scenarios of this linear relationship can be generated.
    \item[ii)] $\hat{\beta}_1$ can be directly interpreted as the degree by which demand ($y$) is altered by a marginal change of the tariff ($x_1$), which is closely linked to the concept of demand elasticity with important economic implications.
\end{itemize}

In the present model, the contextual information would include meteorological and calendar information, together with past realizations of prices and demands (lags). Similarly, in the stochastic optimization contest, we will assume that the retail price $x_1$ and the demand-response by the consumers $y$ are second stage decisions and hence, conditioned by the realization of uncertainty (scenario dependent). Moreover, model (\ref{eq:regre_1}) will be extended with a temporal dependency, as we employ a half-hourly time resolution. This will be further explained in the following sections. 

\subsection{Scenario generation}\label{scenario_gen}
For the stochastic optimization problem we need the most accurate characterization of its uncertain parameters. In the current work we will consider that these include half hourly pool prices, solar availability, and the price coefficient ($\beta_1$) of the linear model introduced in the previous section. We deal with the task of creating this representation by utilizing a large number of randomly generated scenarios. Each of these is generated by concatenating random possible realizations of pool prices, solar availability and the price coefficient of the linear model. The process for generating the pool and solar scenarios is dealt with in Appendix \ref{Anexo:Scenario}. To generate scenarios for the regression coefficient of the price predictor we utilize the distribution of the parameter estimate $\hat{\beta}_1 $ instead of its expected value $\mathbb{E}[\hat{\beta}_1]$. For clarity, we make use of the matrix formulation of the linear system (\ref{eq:regre_1}) as follows:
\begin{equation}
    \mathbf{y} = \mathbf{X}\boldsymbol{\beta} + \boldsymbol{\epsilon},  \quad \text{where} \quad \mathbf{y},\boldsymbol{\epsilon} \in \mathbb{R}^{n \times 1}, \quad \mathbf{X} \in \mathbb{R}^{n \times k}, \quad \boldsymbol{\beta} \in \mathbb{R}^{k \times 1}
\end{equation}

The first step is to identify the distribution of $\hat{\beta}_1$ under OLS and the assumption that the model residuals are i.i.d. with $\mathbb{E}[\epsilon_i] = \mu_\epsilon$, $\mathbb{V}\text{ar}[\epsilon_i] = \sigma^2_\epsilon$ for $i=1,\dots,n$. Using the Hajek-Sidak CLT and assuming that its condition is satisfied (a heuristic argument for this assumption can be found in Appendix \ref{Anexo:CLT} for the data set employed in Section \ref{sec:CaseStudy}), the limit distribution for $\hat{\beta}_1$ is Normal:
\begin{equation}
    \hat{\beta}_1 \to \mathcal{N}\left(\beta_1+\mu_\epsilon \sum_{j=1}^n[(\mathbf{X}^T\mathbf{X})^{-1}\mathbf{X}^T]_{1j}, \sigma^2_\epsilon\sum_{j=1}^n[(\mathbf{X}^T\mathbf{X})^{-1}\mathbf{X}^T]^2_{1j}\right)
\end{equation}
Replacing $\beta_1$ with its estimate $\mathbb{E}[\hat{\beta}_1]$ and $\mu_\epsilon$ with the residual sample mean $\bar{\boldsymbol{\epsilon}}$ which can be assumed to be 0, the distribution can be written as:
\begin{equation}\label{price_dist}
    \hat{\beta}_1 \to \mathcal{N}\left(\mathbb{E}[\hat{\beta}_1], \sigma^2_\epsilon\sum_{j=1}^n[(\mathbf{X}^T\mathbf{X})^{-1}\mathbf{X}^T]^2_{1j}\right)
\end{equation}
Scenarios for the regression coefficient of the price predictor are then sampled from this distribution.

\section{Stochastic Optimization Problem}
\label{chap:LP}

This section describes the mathematical formulation of the retailer problem. This problem is formulated using a risk-averse two-stage stochastic approach in which the demand-response of price-sensitive consumers  is explicitly characterized. The final problem is recast as a quadratic programming problem. 

\subsection{Notation}\label{glosario}

The notation used to formulate the optimization problem is included below for quick reference.

\subsection*{Indices and sets}
\begin{itemize}
\item{$D$: Set of days, indexed by $d$}
\item{$T$: Set of half-hourly periods, indexed by $t$}
\item{{$T_d$}:  Set of time periods for day $d$}
\item{$\Omega$: Set of scenarios, indexed by $\omega$}
\end{itemize}

\subsection*{Variables}
\begin{itemize}
\item{$c_{t,\omega}$: Retailer's cost for each half-hourly period and each scenario}
\item{$d_{t,\omega}$: Total demand of consumers contracting power from the retailer per half-hourly period and scenario}
\item{{$p^{\rm B}$}: Energy contracted by the retailer from forward base contracts (fixed quantity per half-hourly period $p^{\rm B}$ with a fixed price $\bar{\lambda}^{\rm B}$)}
\item{{$p^{\rm C,PPA}$}: Energy contracted by the retailer from the PPA contract for each halg-hourly period}
\item{$p^{\rm P}_{t,\omega}$: Energy bought by the retailer from the pool per half-hourly period and scenario at price $\bar{\lambda}^{\rm P}_{t,\omega}$}
\item{{$p^{\rm PPA}_{t,\omega}$}: Energy purchased from PPA contract with a renewable power producer subject to the energy contracted $p^{\rm C,PPA}$. The energy purchased from this contract at each half-hourly period and scenario depends on the availability of the renewable source, $A^{\rm PPA}_{t,\omega}\in[0,1]$. The energy purchased through the PPA contract is priced at $\bar{\lambda}^{\rm PPA}$}
\item{$r_{t,\omega}$:  Retailer's revenue for each half-hourly period and each scenario}
\item{{$s_\omega$}:  Auxiliary variables for CVaR formulation}
\item{{$\eta$}:  Auxiliary variable for CVaR formulation equivalent to the VaR at the optimal solution of the stochastic problem.}
\item{{$\lambda^{\rm E}_{t}$}: Variable part of the price offered by the retailer}
\item{$\lambda^{\rm R}_{t,\omega}$: Price offered per half-hourly period and scenario to consumers by the retailer}
\item{$\Theta$: Set of all the above decision variables}
\end{itemize}

\subsection*{Parameters}
\begin{itemize}
\item{$A^{\rm PPA}_{t,\omega}$:  Availability of the renewable source for each half-hourly period and each scenario}
\item{{$\hat{D}_t$}:  Half-hourly demand effects outside the price-coefficient interaction} 
\item{{$\pi_\omega$}: Probability assigned to each scenario}
\item{$P^{\rm B}_{\rm max}$:  Maximum energy available for purchase from the forward base contracts in each half-hourly period}
\item{$P^{\rm PPA}_{\rm max}$:  Maximum energy available for purchase from the PPA contract in each half-hourly period}
\item{{$\alpha$}:  Fraction of the distribution to be used in the CVaR calculation (as $1-\alpha$)}
\item{$\beta_{\omega}$:  Coefficient of the price predictor for each scenario}
\item{{$\gamma$}:  Parameter for establishing bounds for every element of the variable part of the price ($\lambda^{\rm E}_{t}$)}
\item{{$\bar{\lambda}^\text{E} $}:  Mean value of the variable part of the price ($\lambda^{\rm E}_{t}$)}
\item{$\bar{\lambda}^{\rm B}$:  Price for energy contracted from forward base contracts}
\item{$\lambda^{\rm P}_{t,\omega}$:  Half-hourly pool prices for each scenario}
\item{$\bar{\lambda}^{\rm PPA}$:  Price for energy
contracted from the PPA contract}
\item{{$\chi$}:  Parameter weighing the importance of the CVaR against the profit}
\end{itemize}


\subsection{Formulation}
The formulation for the decision-making problem faced by a retailer that desires to determine its energy procurement strategy and the selling prices offered to clients is included below. This formulation considers explicitly a data-driven model to characterize and anticipate the price-sensitiveness of the retailer's clients. The electricity procurement options of the retailer are purchasing from i) the pool at variable and uncertain price, ii) a forward contract with fixed and identical prices and quantities for every half-hourly period, and iii) a PPA with a renewable energy provider that offers a variable and uncertain production at fixed price. 

As stated above, the formulation of this problem corresponds to a risk-averse two-stage stochastic programming approach. The uncertain parameters considered in this problem are the pool prices, ${\lambda}^{\rm P}_{t,\omega}$, the elasticity of the demand of the clients of the retailer, $\beta_{\omega}$, and the availability of the PPA,  ${A}^{\rm PPA}_{t,\omega}$. These parameters are characterized using a set of scenarios $\omega \in \Omega$. The risk-aversion of the retailer is modeled by the CVaR of the profit. The first-stage decisions of this problem are the retailer-dependent part of the selling price, ${\lambda}^{\rm E}_t$, and the half-hourly energy contracted from forward and PPA contracts, ${p}^{\rm B}$ and ${p}^{\rm C,PPA}$, respectively.

\begin{subequations}\label{risk_CVaR}
\begin{align}
&\max_{\Theta}\quad \left(1-\chi\right)\sum_{\omega \in \Omega}\sum_{t\in T}\pi_\omega\left(r_{t,\omega}-c_{t,\omega}\right) + \chi \left(\eta - \frac{1}{1-\alpha} \sum_{\omega \in \Omega}\pi_\omega s_\omega\right) \label{PCPV_sto_FO}\\
&\mbox{\ s.t.}\notag\\
&\qquad r_{t,\omega} = \lambda^{\rm R}_{t,\omega}d_{t,\omega},\quad\forall t,\omega\label{PCPV_sto_01}\\
&\qquad  c_{t,\omega} = \bar{\lambda}^{\rm P}_{t,\omega} p^{\rm P}_{t,\omega}+\bar{\lambda}^{\rm B}p^{\rm B}+\bar{\lambda}^{\rm PPA}p^{\rm PPA}_{t,\omega},\quad\forall t,\omega\label{PCPV_sto_02}\\
&\qquad  d_{t,\omega} = \beta_\omega\lambda^{\rm R}_{t,\omega}+\hat{D}_{t},\quad\forall t,\omega\label{PCPV_Sto_03}\\
&\qquad  d_{t,\omega} = p^{\rm P}_{t,\omega}+p^{\rm B}+p^{\rm PPA}_{t,\omega},\quad\forall t,\omega\label{PCPV_sto_04}\\
& \qquad 0\leq p^{\rm B}\leq P^{\rm B}_{\rm max} \label{PCPV_sto_05} \\
&\qquad   p^{\rm PPA}_{t,\omega} = A^{\rm PPA}_{t,\omega}p^{\rm C,PPA},\quad\forall t,\omega\label{PCPV_sto_06}\\
&\qquad  0\leq p^{\rm C,PPA}\leq P^{\rm PPA}_{\rm max} \label{PCPV_sto_07}\\
&\qquad  \lambda^{\rm R}_{t,\omega} =  \lambda^{\rm E}_t+ \left(\lambda^{\rm P}_{t,\omega}-\bar{\lambda}^\text{E}\right),\quad\forall t,\omega  \label{PCPV_sto_08} \\
&\qquad   \frac{1}{|T_d|}\sum_{t\in T_d}\lambda_t^\text{E} =\bar{\lambda}^\text{E} \label{PCPV_sto_09} \\
&\qquad  (1-\gamma)\bar{\lambda}^\text{E}  \leq \lambda_t^\text{E} \leq (1+\gamma)\bar{\lambda}^\text{E} ,\quad\forall t \label{PCPV_sto_10} \\
%
%
&\qquad  \eta - \sum_{t \in T}\left(r_{t,\omega}-c_{t,\omega}\right) \leq s_\omega, \quad \forall \omega \label{PCPV_sto_12}\\
&\qquad  0\leq s_\omega, \quad \forall \omega \label{PCPV_sto_13} 
\end{align}
\end{subequations}
where
%
$\Theta=\{d_{t,\omega}, p^{\rm B}, p^{\rm C,PPA}, p^{\rm P}_{t,\omega},p^{\rm PPA}_{t,\omega},r_{t,\omega},s_\omega,\lambda^{\rm E}_t, \lambda^{\rm R}_{t,\omega},\eta\}$ is the set of optimization variables.

The objective function \eqref{PCPV_sto_FO} is the weighted sum of the expected profit of the retailer and the CVaR of the profit. The CVaR is used to model the risk faced by the retailer and it is equal to the average value of the $(1-\alpha)$ scenarios with lowest profit. Parameter $\chi \in [0,1]$ is used to model the risk aversion of the retailer. If  $\chi=0$, the CVaR term is neglected and the retailer behaves as a risk-neutral decision maker. Values of $\chi$ greater than 0 represent different risk-aversion degrees. The expected profit is computed as the sum of the profits over the set of scenarios multiplied by their respective probabilities, $\pi_\omega$. Note that we make use of the linear CVaR formulation proposed by  \cite{Rockafellar} and \cite{Rockafellar00optimizationof}.

Equation \eqref{PCPV_sto_01} computes the revenue $r_{t,\omega}$ obtained by the retailer for each time period $t$ and scenario $\omega$ as the demand $d_{t,\omega}$ multiplied by the price of the corresponding time period $\lambda^{\rm R}_{t,\omega}$.
The cost incurred by the retailer in each period $t$ and scenario $\omega$, $c_{t,\omega}$, is formulated in equation \eqref{PCPV_sto_02} and it is equal the sum of the cost of purchasing energy in the pool, $\bar{\lambda}^{\rm P}_{t,\omega}p^{\rm P}_{t,\omega}$, the cost of purchasing through the bilateral contract, $\bar{\lambda}^{\rm B}p^{\rm B}$, and the cost of purchasing from the PPA, $\bar{\lambda}^{\rm PPA}p^{\rm PPA}_{t,\omega}$.

Equation \eqref{PCPV_Sto_03} is a scenario-dependent extension of the linear model introduced in (\ref{eq:regre_1}), and represents that the total demand of the consumers is given by the linear relationship between the price offered by the retailer, $\lambda^{\rm R}_{t,\omega}$, and the consumer response quantified using the coefficient of the price predictor, $\beta_\omega$, plus the demand that is independent of the selling price, $\hat{D}_{t}$, which is characterized by contextual information. 

The energy balance of the retailer in each period and scenario is formulated by equation \eqref{PCPV_Sto_03}, which enforces that the total demand of the clients of the retailer in each period and scenario has to be equal to the energy purchased in the pool plus the energy contracted through forward and PPA contracts.  
Constraints \eqref{PCPV_sto_05} limit the energy contracted from the forward contract.
Constraints \eqref{PCPV_sto_06} and \eqref{PCPV_sto_07} formulate the energy contracted through the PPA. Note that the energy associated with period $t$ and scenario $\omega$, $p^{\rm PPA}_{t,\omega}$, depends on the energy contracted $p^{\rm PPA}$ and on the availability of the production $A^{\rm PPA}_{t,\omega}$. Note that the availability $A^{\rm PPA}_{t,\omega}$ ranges in the interval $[0,1]$. This parameter is used to model availability of the renewable source for a given period and scenario. Therefore, if $A^{\rm PPA}_{t,\omega}=1$ the whole capacity contracted will be available for the retailer. On the contrary, if $A^{\rm PPA}_{t,\omega}=0$ it is not possible to obtain energy from the PPA in period $t$ and scenario $\omega$.

Constraints \eqref{PCPV_sto_08}-\eqref{PCPV_sto_10} formulate mathematically the ToU rate offered by the retailer to its clients. As it is usual in many European countries, this paper assumes that the selling prices associated with the ToU rate are indexed to pool prices. Then, as stated by constraints \eqref{PCPV_sto_08}, the price that the clients of the retailer have to pay in each half-hourly period, $\lambda^{\rm R}_{t,\omega}$ is equal to a price term decided by the retailer, $\lambda^{\rm E}_{t}$, plus an additional term dependent on the pool price, $\left(\lambda^{\rm P}_{t,\omega}-\bar{\lambda}^{\rm E}\right)$. Symbol $\bar{\lambda}^{\rm E}$ refers to the average value of the price term decided by the retailer, which is a known parameter of the problem \eqref{PCPV_sto_09}. Additionally, parameter $\bar{\lambda}^{\rm E}$ is also used to bound the value of the price term  $\lambda^{\rm E}_{t}$ in each half-hourly period by constraints \eqref{PCPV_sto_10}. Parameter $\gamma$ in \eqref{PCPV_sto_10} is used to establish these limits. Because of pool prices are characterized as a stochastic parameter modeled using a set of scenarios, note that the selling price  $\lambda^{\rm R}_{t,\omega}$ is also dependent of the scenario index $\omega$.

Finally, constraints \eqref{PCPV_sto_12} and \eqref{PCPV_sto_13} are used to model the CVaR (\cite{Rockafellar} and \cite{Rockafellar00optimizationof}). The value of $\eta$ equals the Value at Risk (VaR) at the optimal solution of problem \eqref{risk_CVaR}.

Observe that problem \eqref{PCPV_sto_FO}-\eqref{PCPV_sto_13} is a quadratic linear programming problem that can be solved using commercial solvers with global optimality guarantees. The quadratic term results from replacing the demand $d_{t,\omega}$ expressed in \eqref{PCPV_Sto_03} into constraint \eqref{PCPV_sto_01}, which results in $r_{t,\omega} = \beta_\omega(\lambda^{\rm R}_{t,\omega})^2+\hat{D}_{t}\lambda^{\rm R}_{t,\omega}$. 

\section{Case Study}\label{sec:CaseStudy}

\subsection{The dataset}\label{dataset}
The dataset utilized was collected by the UK Power Networks for investigating the differences between grid users with a dynamic tariff and those with a standard tariff. The half-hourly data \cite{SmartMeter}, taken from homes in London in 2013, has around 1100 customers using a dynamic tariff (referred to as ToU, standing for Time of Use, from now on) and the remainder of the dataset, around 4500 customers, using a standard tariff. We are currently interested only in those with the ToU tariff as for those we can include the price predictor. The information about the electric price for a certain time period was given to customers through the use of smart meters which would offer 3 different prices depending on external factors. The prices issued are High (67.20 p/kWh), Low (3.99 p/kWh) and normal (11.76 p/kWh), where p stands for pence or 1/100 of a British pound (£). The dataset includes half-hourly information about electric consumption for the ToU customers and multiple other variables to be used as predictors, including price. This is not the most adequate setting to infer the price-response of customers i.e., it will be convenient to observe a higher number of different price levels or even a continuous price signal. However, we show through Section \ref{ch:pred} that the resulting inferred price-response is low but still statistically meaningful for some models. In other words, we are able to characterize a significant change in the expected load when customers are offered a different price level for the next hour (e.g., from the Low to the High retail price). This flexibility may arise from several factors: the possibility of displacing the use of some appliances to cheaper hours (e.g., washing machine), to avoid electricity waste in expensive hours (e.g., turn off the lights that are not needed), the potential use of electrical storage, etc.

Figure \ref{Comparacion_Consumo} depicts two households' energy consumption during a week. Comparing these two profiles with the mean consumption of the 1100 consumers (also represented in the same figure) it is clear that individual household consumption signals present more noise and variability than said mean. This makes characterizing and predicting the mean consumption pattern much more feasible than those of individual households.
\begin{figure}[H]
    \centering
    \includegraphics[width=\textwidth]{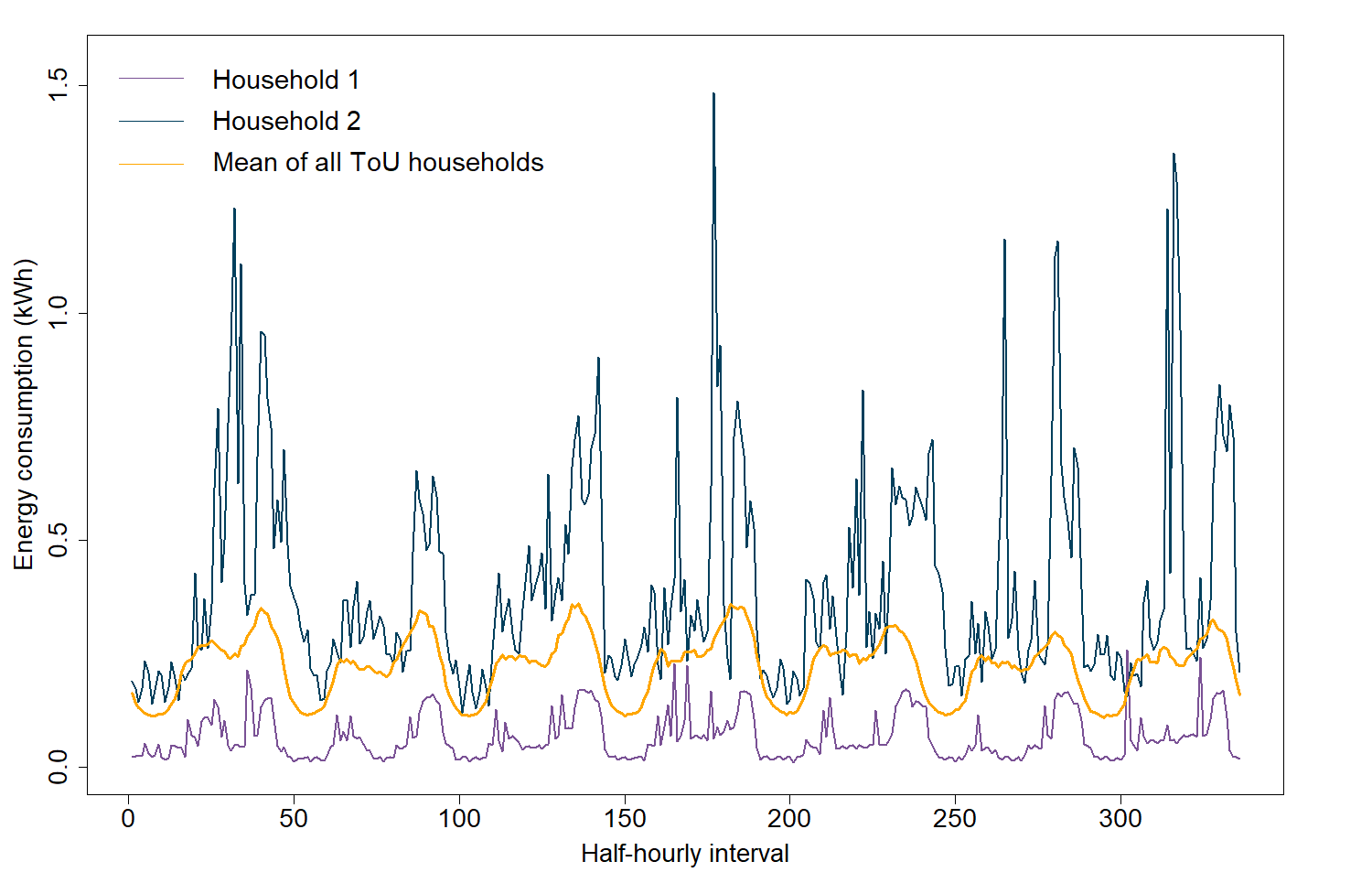}
    \caption{Plot of energy consumption of 2 random households and mean of consumption of households with ToU tariff for the week starting the first of April of 2013.}
    \label{Comparacion_Consumo}
\end{figure}

This is one of the most complete openly available datasets where variable real-time electricity prices are offered to disaggregated consumers. However there are a few shortcomings which must be addressed. The dataset spans only 1 year (2013) and in the present work we are interested in considering the retailer's options for a month, this month being chosen arbitrarily as December. This means that we are left with only 11 months of data to fit the model. If we focus on linear models, predicting accurately the half hourly-demand for an entire month is challenging, as variables such as lags of the electric consumption of the customers or weather data are necessary. For this reason we have reached the compromise of developing a model which can predict accurately the demand for the next 24 hours (that is, the nearest lagged variables are at least 48 half-hourly periods away from the period to be predicted). What this means is that the predictions would require information which would be unknown at the time of the decision. A remedy could be found by using variables from different years (however the dataset does not allow for this as it is limited to 1 year) or iterate through the testing window so that predictions for a given day can be used as input data for the following one. As solving the optimization problem for just one day, which would comply with the model's predictors, was deemed too unrealistic, we have chosen to operate on the assumption that this shortcoming could be fixed with a richer dataset. This assumption is well justified as retailers would rapidly accumulate real time (and price dependent) consumption data as smart meters penetrate among its consumers. Hence, we have proceeded to work with December as our testing month where the different contracts and tariffs signed in the first stage hold.

There is also a problem stemming from the price being offered in just 3 levels, with the normal price being the most prevalent by a large margin. This necessarily makes the price predictor have low impact with respect to the others if any reasonable degree of accuracy in predictions is desired. The importance of this predictor will be analysed for the models explored, but it is certain that with a dataset with more different price signals, as explained above, the impact of the predictor would be of greater significance.

For these reasons, in the following sections we want to provide a general methodology that can be employed by electricity retailers with access to historical consumption data from its price-sensitive costumers. Hence, the aim is not to obtain the best forecasting model, but rather to explore how this data-driven model can be included implicitly on the decision making process of a retailer and better characterize (statistically) potential uncertainty sources.

\subsection{Predictive models: demand vs price}\label{ch:pred}

We explore 3 different linear models to predict the aggregate consumers demand as a function of the electricity price and other covariates, for the month of December. These are dubbed Small Model, Large Model and Combined Model. A brief description of each model follows.
\subsubsection{Small Model}
This is the simplest model. The predictors used are the following:
\begin{itemize}
    \item \textbf{Apparent temperature:} The apparent temperature of the half-hourly period. It is a combination of air temperature, wind speed and humidity.
    \item \textbf{Humidity:} The atmospheric humidity of the half-hourly period.
    \item \textbf{Demand lags:} Two lags are used, the electric demand of 48 periods earlier (1 day) and the electric demand of 336 periods earlier (1 week).
    \item \textbf{Month:} The month to which the half-hourly period belongs.
    \item \textbf{Price:} The price signal communicated to the customers for the half-hourly period. Intentionally kept as a continuous variable despite its categorical nature to further utilize its corresponding coefficient.
\end{itemize}
Also all 3 interactions between \textbf{Month}, \textbf{Demand lags} and \textbf{Apparent temperature} are included. 
\subsubsection{Large model}
This is a larger model (30 predictors) expanding on the previous Small Model with the following additional predictors:
\begin{itemize}
    \item \textbf{Temperature:} The temperature of the half-hourly period.
    \item \textbf{Splines:} A fit of a 6 knot cubic spline to the data to quantify annual effects in demand. The corresponding curve to the test data is a continuation of this fitted curve. This approach stems from \cite{Hyndman10}.
    \item \textbf{Day of week:} 6 dummy variables coding the day of the week for the half-hourly period.
    \item \textbf{Demand lags:} Lagged electric demand in intervals of 48, 96, 144, 192, 240, 288 and 336 half-hourly periods to the corresponding period.
    \item \textbf{Temperature lags:} Lagged temperature in intervals of 48, 96, 144, 192, 240, 288 and 336 half-hourly periods to the corresponding period.
    \item \textbf{Minimum demand:} Minimum electric demand registered in the previous 24 hours to the half-hourly time period.
    \item \textbf{Maximum demand:} Maximum electric demand registered in the previous 24 hours to the half-hourly time period.
    \item \textbf{Mean demand:} Mean electric demand registered in the previous 24 hours to the half-hourly time period.
    \item \textbf{Minimum temperature:} Minimum temperature registered in the previous 24 hours to the half-hourly time period.
    \item \textbf{Maximum temperature:} Maximum temperature registered in the previous 24 hours to the half-hourly time period.
    \item \textbf{Mean temperature:} Mean temperature registered in the previous 24 hours to the half-hourly time period.
    \item \textbf{Holiday:} Dummy variable representing if the half-hourly period corresponds to a date categorized as a holiday.
\end{itemize}
\subsubsection{Combined Model}
This model utilizes the same predictors as the Large Model. However, this model is actually a combination of 48 different linear models each being tasked with the prediction of a single half-hourly period of the day, giving it added flexibility. This model was inspired by the work of \cite{Hyndman10} and \cite{Hyndman12}.
\subsubsection{Model Comparisons}\label{comparisons}
We start by comparing the metrics of Root Mean Squared Error (RMSE), Mean Squared Error (MAE), and $R^2$ of model performance across the three models. 
\begin{table}[H]
\begin{tabular}{|l|l|l|l|l|l|}
\hline
       & MAE Train & RMSE Train & MAE Test & RMSE Test & $R^2$ \\ \hline
Small Model & 12.38     & 18.92      & 14.54    & 20.80     & 0.945                \\ \hline
Large Model & 12.42     & 18.18      & 13.51    & 19.53     & 0.949                \\ \hline
Combined Model & 8.99     & 12.86       & 13.22    & 19.18    & 0.974                \\ \hline
\end{tabular}
\caption{\label{tabla1}Table representing the aforementioned metrics of model performance.}
\end{table}
In Table \ref{tabla1} we can see that although the Combined Model's overall performance is better, there appear to be some symptoms of over fitting, as the differences between test and train metrics are much larger than in both other models. Barring this possible over fitting problem in the Combined Model, the models' performances are reasonably good across the board. Given this similar performance we now concern ourselves with the impact of the price predictor for each model. Since we are trying to model price sensitivity in customers, a larger impact of the price predictor is desired. To quantify this impact, the following analysis are made.
\subsubsection{Impact of the standardized coefficient}
We first analyze the impact of the standardized coefficient for the price predictor (for clarity denoted as $\beta_{\text{price}}$) with respect to the rest of standardized predictors. That is $|\beta_\text{price}| / \sum^k_{j=1} |\beta_j|\times 100$.

It is worth noting that since the Combined Model is composed of 48 linear models, the representation of the aforementioned model is composed of 48 standardized coefficients. Figure \ref{Coefficient_Percentage} shows the comparison between the models.
\begin{figure}[H]
    \centering
    \includegraphics[width=0.8\textwidth]{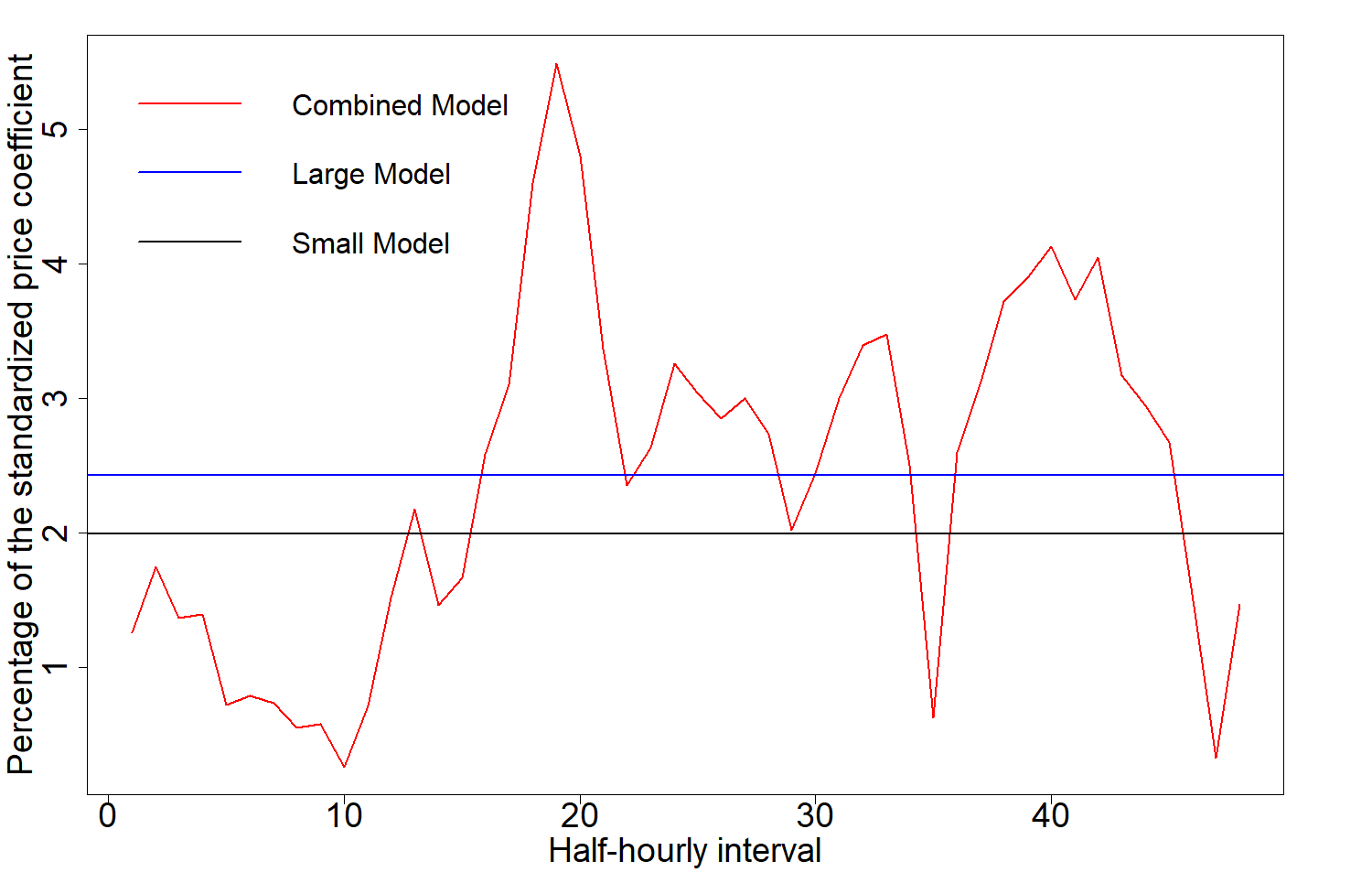}
    \caption{Plot of the relative percentage of the standardized price coefficient with respect to the rest for each model.}
    \label{Coefficient_Percentage}
\end{figure}
It is clear from the figure that the impact of the price predictor through this metric is low for all three models. It is interesting to compare the Large and Small models and note that even though the Large Model has around 3 times as many predictors, the percentage taken up by the standardized price predictor is larger than that of the Small Model. Upon further inspection this is due to some of the lag predictors taking much higher values than in the Large Model.
\subsubsection{Changes in RMSE and MAE when dropping the price predictor}
To further explore the impact of the price predictor we now investigate the impact in the models' performances when the price predictor is removed and the models are fitted again. We compare the differences in RMSE and MAE for the models with and without the price predictor across both the train and test sets. Once again the Combined Model's differences are represented for each half-hourly interval for the train set, however, for the test set the number of data points was deemed insufficient to represent differences for each half-hourly period, so this differences are averaged across all the linear models which compose the aforementioned model. The results are plotted in Figures \ref{RMSE_diff} and \ref{MAE_diff}.
\begin{figure}[H]
    \centering
    \includegraphics[width=0.8\textwidth]{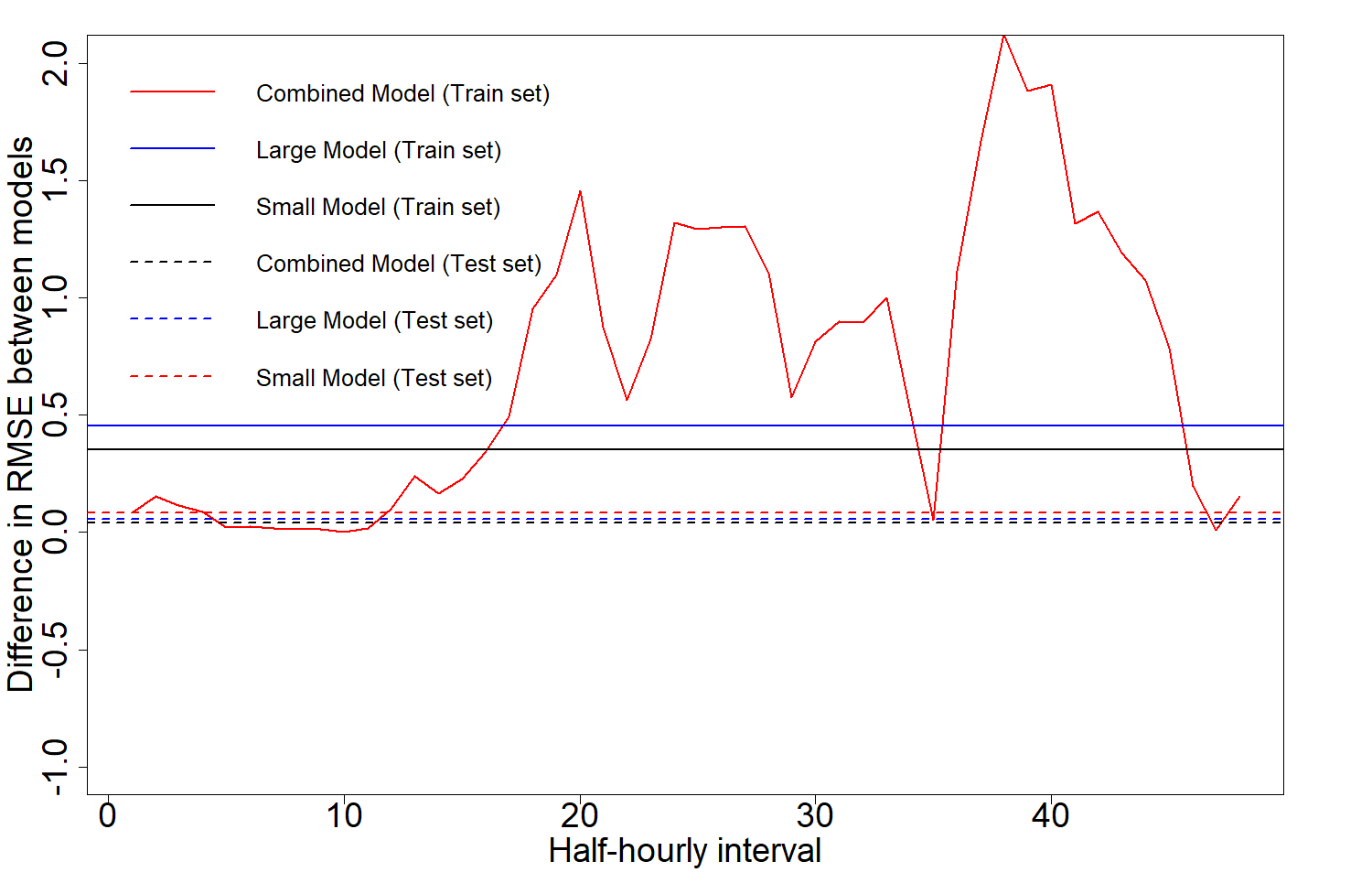}
    \caption{Plot of the difference in RMSE between models with and without the price predictor.}
    \label{RMSE_diff}
\end{figure}
\begin{figure}[H]
    \centering
    \includegraphics[width=0.8\textwidth]{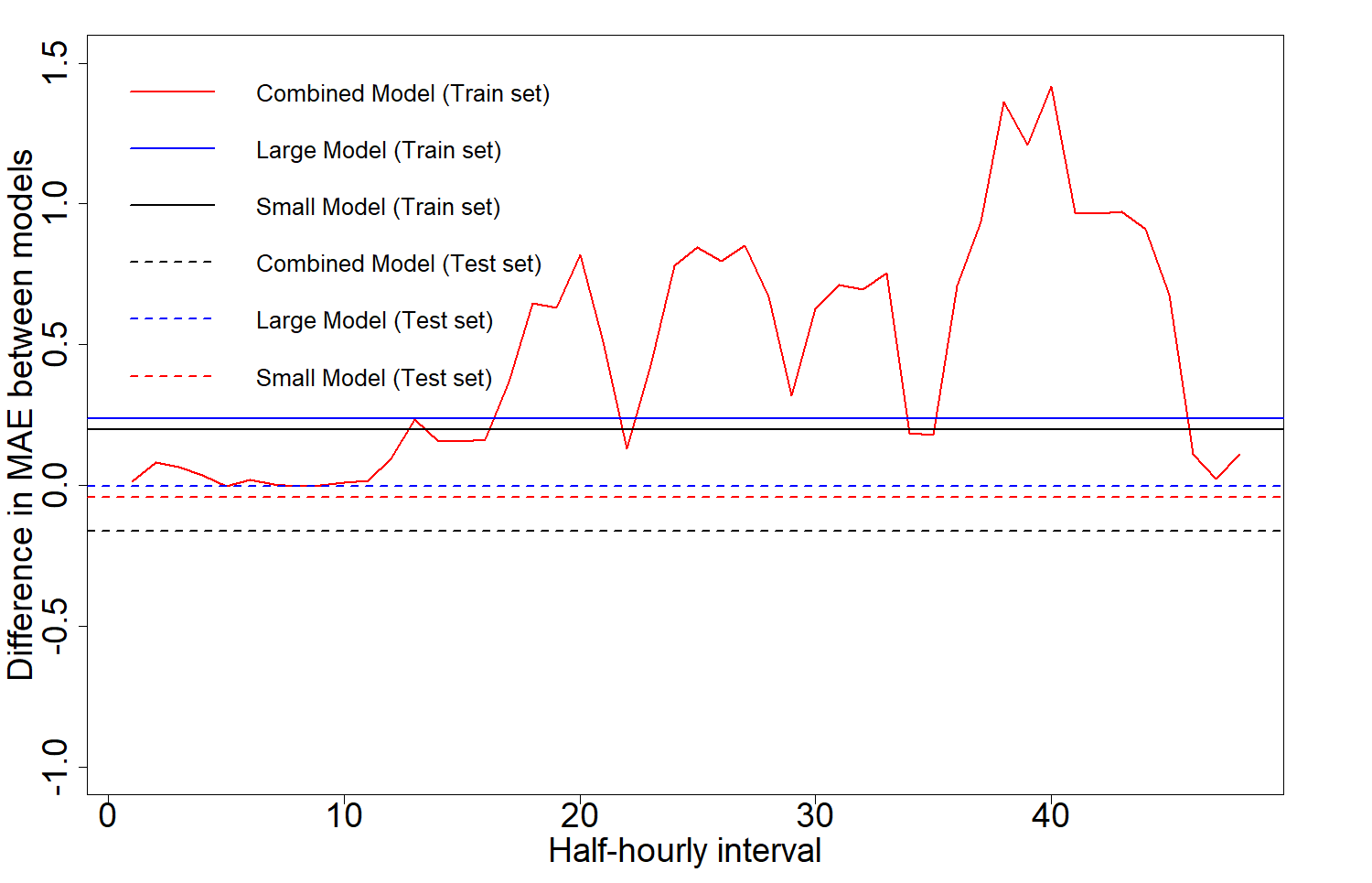}
    \caption{Plot of the difference in MAE between models with and without the price predictor.}
    \label{MAE_diff}
\end{figure}
The results show that the importance of the price predictor is low across all 3 models compared and some over fitting is apparent when comparing the respective performances between train and test sets. It is interesting to note that the only model which does not decrease its performance for neither RMSE and MAE nor train or test sets when the price predictor is dropped is the Large Model. Hence, we can conclude that the impact of the price predictor is low in the three models.

\subsubsection{Model selection}
Despite the results in the last section and considering that this low dependence on price probably stems from the shortcomings of the dataset exposed in Section \ref{dataset}, since choosing the best linear model possible is not the main focus of our work, we deem the price coefficient to be large enough for our purposes in all 3 models. Taking into consideration now both the metrics and the importance of the price predictor exposed in Section \ref{comparisons} we conclude that the best fit for our work is the Large Model. It does not exhibit as much over fitting behaviour as the Combined Model, while being simpler and easier to work with. Moreover, it exhibits a better performance and higher impact for the price predictor with respect to the Small Model, while maintaining all its benefits aside from a lower number of variables which was not deemed sufficient to justify its selection. Thus, from this point forward we consider only the Large Model and will be referred to simply as the linear model.

\subsection{Scenario generation and reduction}
For the stochastic problem dealt with in the following section we require a scenario set to optimize over. To this end we follow the procedure describe in Section \ref{scenario_gen} to generate a total of 1000 scenarios including random realization of pool prices $\bar{\lambda}^{P}_{t,\omega}$, availability of solar resources $A^{\text{PPA}}_{t,\omega}$ and price coefficient predictors $\beta_{\omega}$. However, utilizing this raw scenario set can be computationally prohibitive. Thus we seek to reduce it to a smaller subset of scenarios with similar statistical properties. To do this, we employ a scenario reduction technique proposed by \cite{Pineda2010ScenarioRF} where the distance between scenarios is measured in terms of their impact on the problem objective function. Hence, we solve deterministic versions of problem \eqref{risk_CVaR} for each original scenario. Afterwards, mean shift clustering is utilized to group the scenarios into clusters with center $C_k$, finally the reduced scenarios are obtained by finding the closest scenario to the centers of these clusters, that is if $\mathcal{S}$ is the unreduced scenario set, $K$ is the number of clusters produced by the mean shift algorithm, $\mathcal{S}_r$ is the reduced scenario set and $f$ is the cost function given by the profit in \eqref{risk_CVaR}:
\begin{equation}
    \mathcal{S}_r = \left\{ s \in \mathcal{S} \,| \, \text{argmin}\norm{f(s)-C_k}{}_2, \quad k \in \{1,\dots,K\} \right\}
\end{equation}
Once the reduced scenario set is produced, new probabilities have to be assigned to each reduced scenario by taking into account how many unreduced scenarios originally belonged to the cluster of the reduced scenario. This whole procedure is represented by the CDF's of the reduced and unreduced scenario sets of Figure \ref{CDF}.
\begin{figure}[H]
    \centering
    \includegraphics[width=\textwidth]{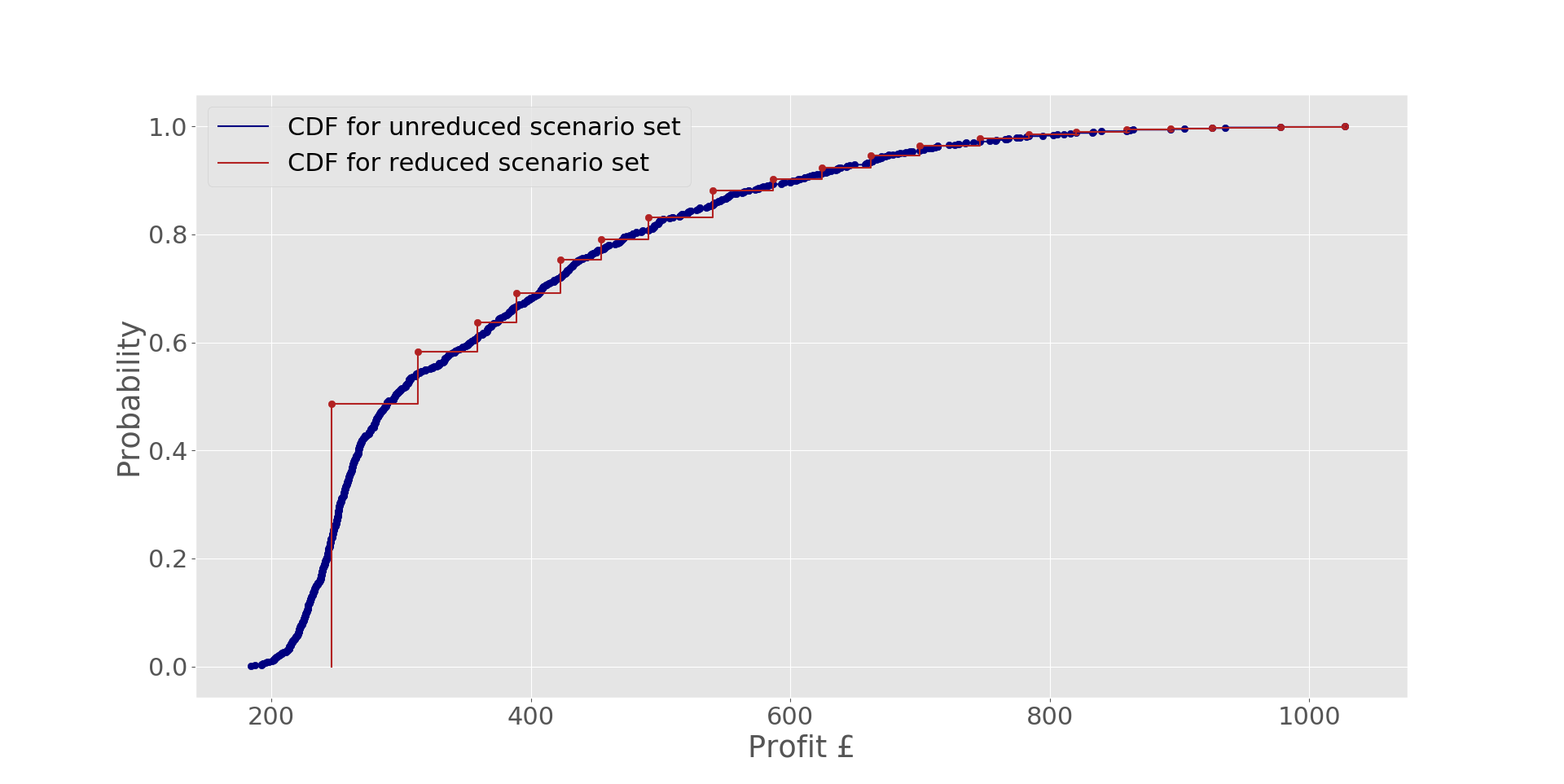}
    \caption{CDF's of reduced and unreduced scenario sets.}
    \label{CDF}
\end{figure}

\subsection{Solution to the Stochastic Problem}
We solve problem the stochastic problem (\ref{risk_CVaR}) by means of the scenario set described in the previous section. Moreover, we consider that the maximum energy available for purchase from the forward and PPA contracts ($P^{\rm B}_{\rm max}$ and $P^{PPA}$) is 80 kWh, at purchase prices of 4.6 and 4.8 p/kWh, respectively.
The mean value of the variable part of the price ($\bar{\lambda}^E$) is assumed to be 1 p/kWh. Bounds on the variable part of the price ($\lambda^E_t$) are impose assuming $\gamma=0.25$ and the CVaR is computed over the first 10-percentile of the profit distribution ($1-\alpha=0.9$). Figure \ref{Pool_solar} depicts the summary (mean values) of the scenarios for pool prices and solar availability considered (generated as described in Appendix \ref{Anexo:Scenario}). The dotted line is the mean of pool prices over the half-hourly time periods which have a solar availability of at least $0.1$. Although this value was chosen arbitrarily, careful examination of the figure makes it clear that it will always be greater than the mean over all half-hourly time periods.

\begin{figure}[H]
    \centering
    \includegraphics[width=\textwidth]{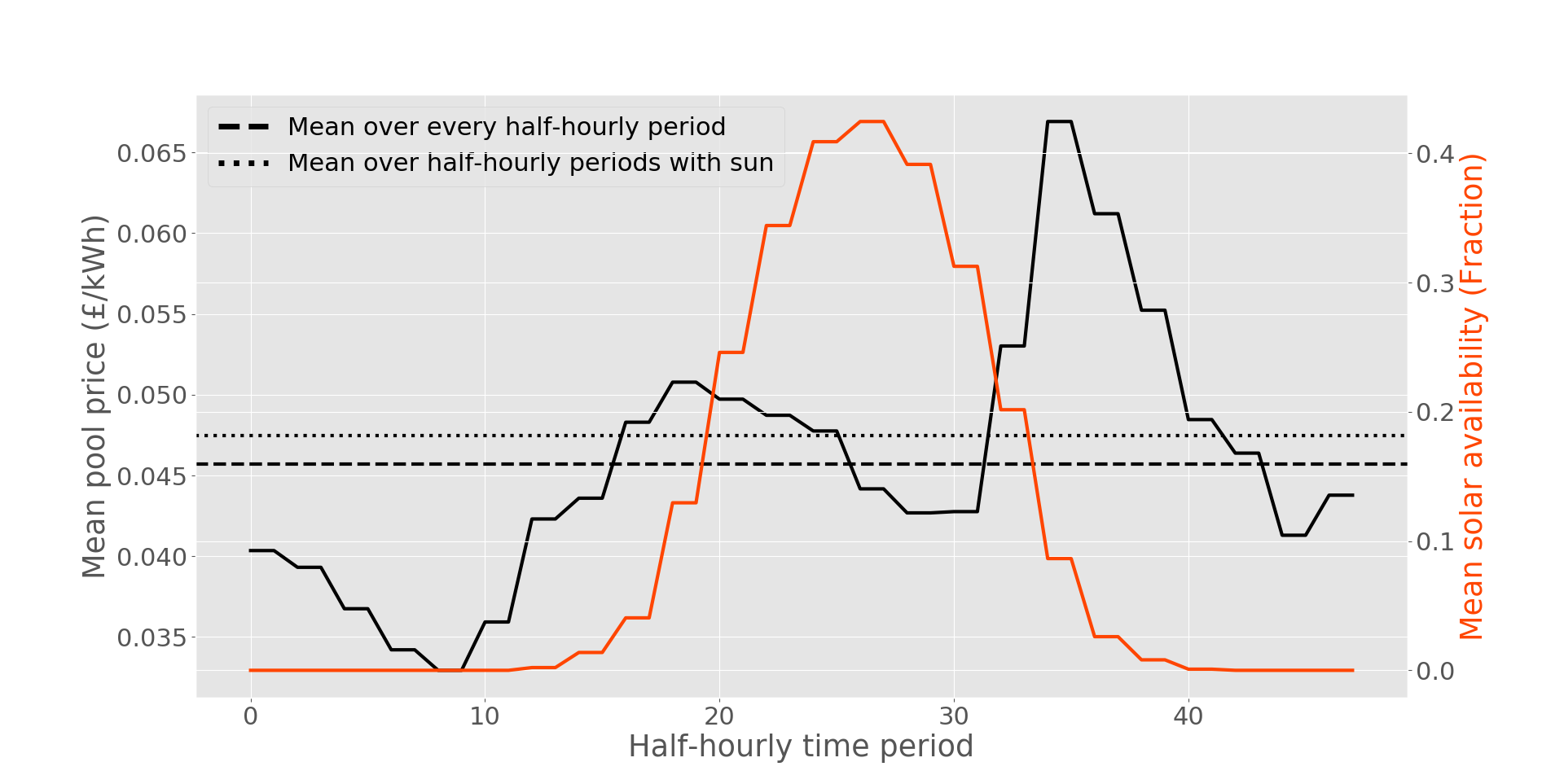}
    \caption{Mean pool price and mean solar availability for the test month (December).}
    \label{Pool_solar}
\end{figure}

\subsubsection{Efficient frontier}
We are first interested in deciding which values of $\chi$ are meaningful to explore. For that purpose we compare, for different values of $\chi$, the expected profit versus the CVaR. Note that values of $\chi = 0$ (risk neutral) and $\chi = 1$ (risk averse) are approximated by $\chi = 0.0001$ and $\chi = 0.9999$ respectively to avoid numerical inconsistencies.
\begin{figure}[H]
    \centering
    \includegraphics[width=\textwidth]{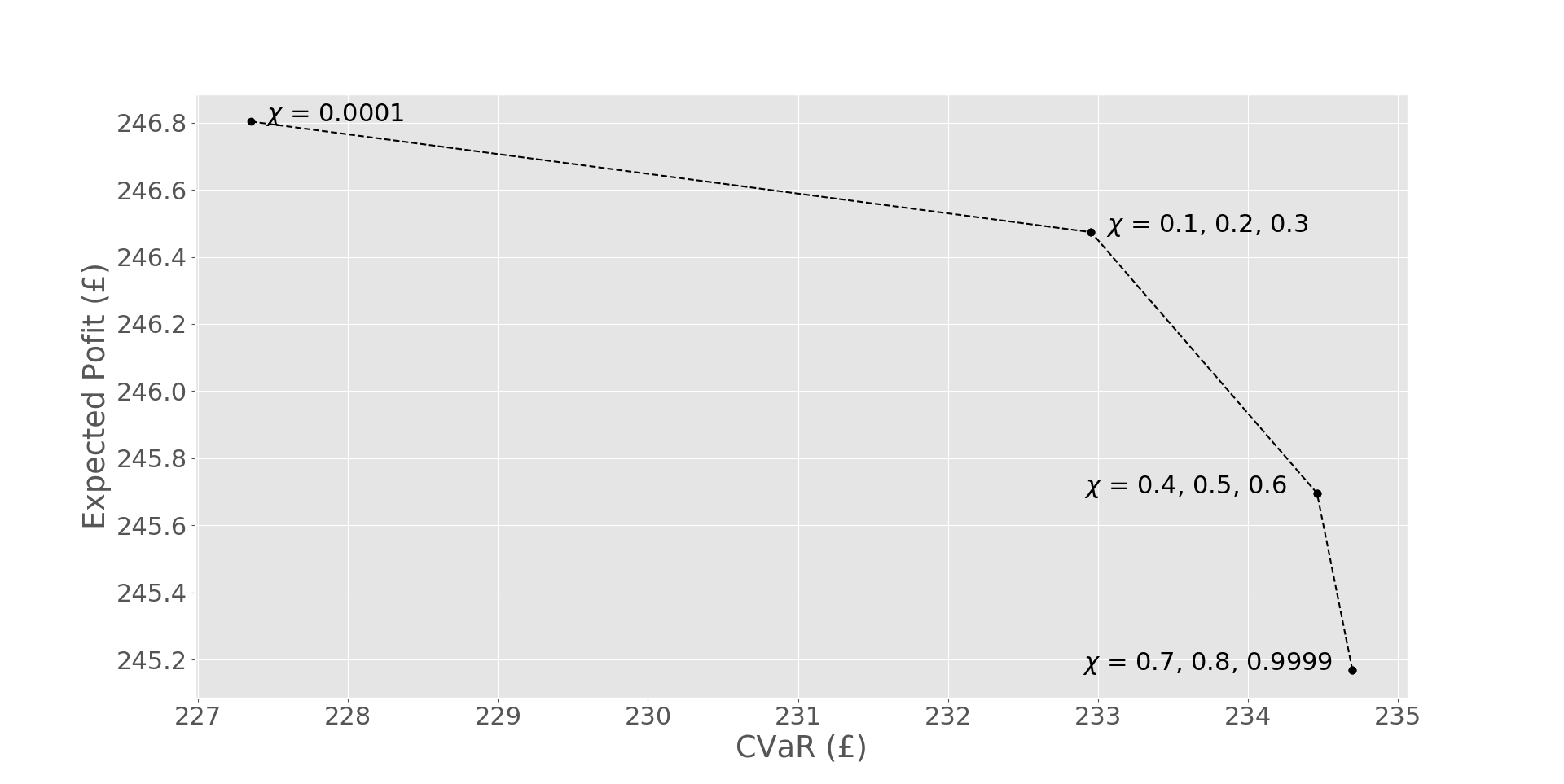}
    \caption{Expected profit versus CVaR for different values of $\chi$. Problem was solved for values between 0 and 1 at intervals of $0.1$. This type of plot is called efficient frontier.}
    \label{frontier}
\end{figure}

The resulting efficient frontier is depicted in Figure \ref{frontier}, where we found 4 main groups of values of $\chi$ which entail different combination of expected profit vs CVaR. For clarity, in the following section we focus our analysis on the 2 extreme values of $\chi = 0.0001$ and $\chi = 0.9999$. Also, it is interesting to note that the losses in expected profit are higly compensated by the increase of the CVaR, thus we can infer that the inclusion of risk for the retailer's optimal decision is probably desired, especially a value of $\chi$ between 0.1 and 0.3 since that is the region where the drop in expected profit is small with a large increased of the CVaR.

\subsubsection{Daily Price distribution}
We want to study how the prices offered by the retailer ($\lambda^R_{t,\omega}$) behave at each hour and how these vary among scenarios.  Figures \ref{fig:price_chi0} and \ref{fig:price_chi1} represent the distribution of prices for each day of the week in terms of their average half hourly values over the test set (month of December), for the risk neutral and risk averse cases, respectively. The maximun and minimun values are also included to quantify the level of price variability.
\begin{figure}[H]
  \centering
  \includegraphics[width=.8\linewidth]{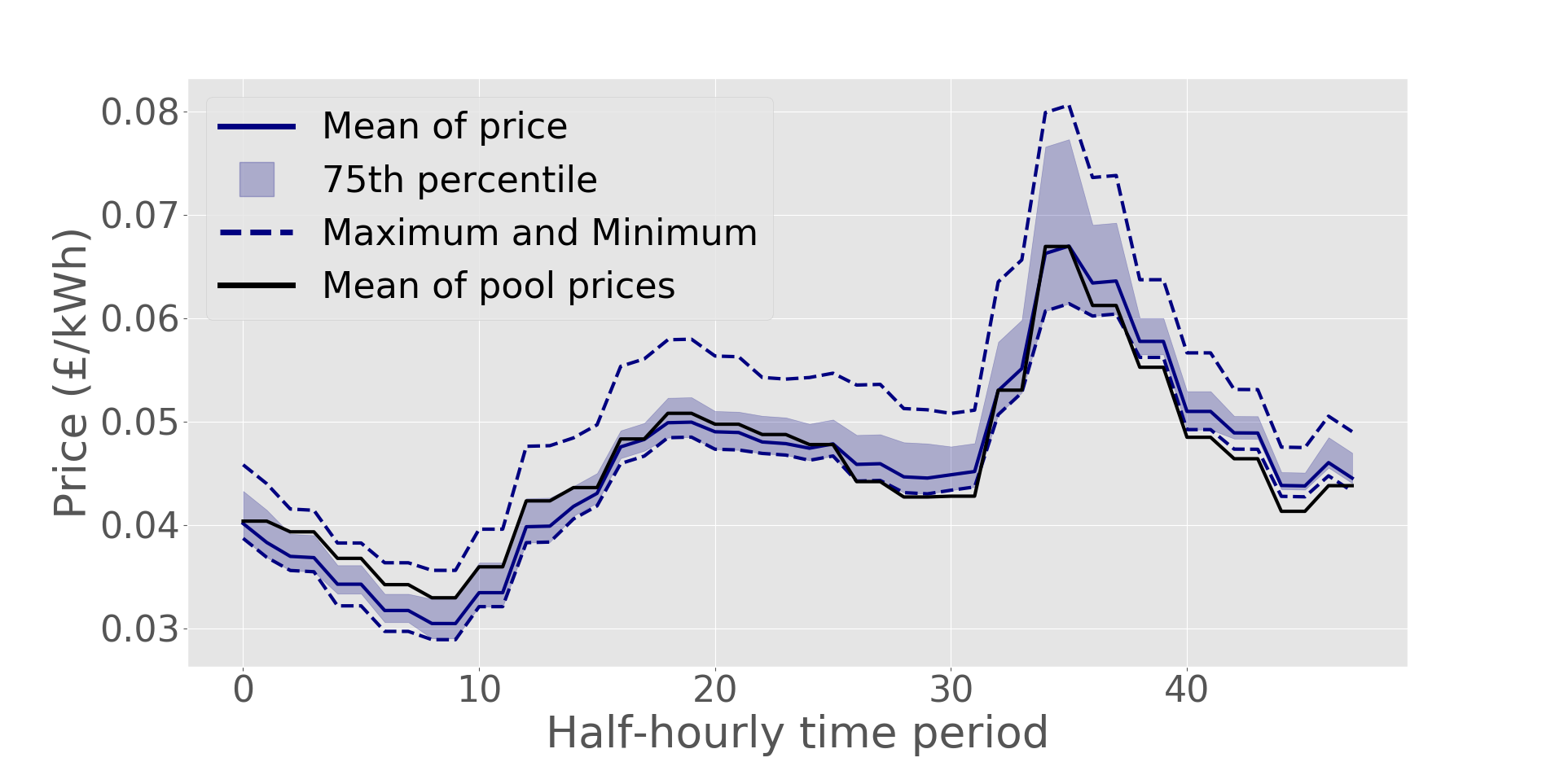}
  \caption{Price distribution for $\chi = 0.0001$ (risk-neutral)}
  \label{fig:price_chi0}
\end{figure}

\begin{figure}[H]
  \centering
  \includegraphics[width=.8\linewidth]{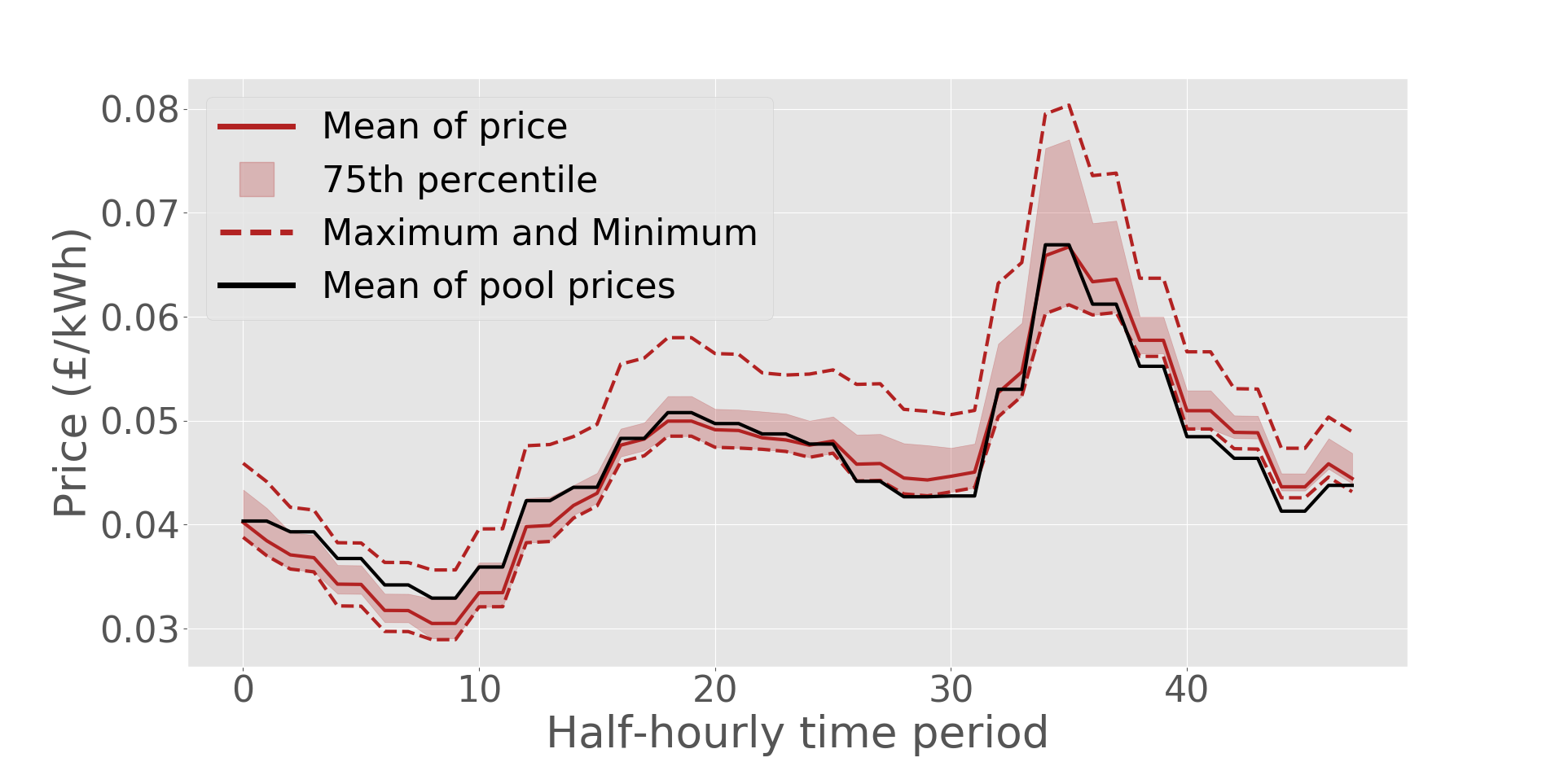}
  \caption{Price distribution for $\chi = 0.9999$ (risk-averse)}
  \label{fig:price_chi1}
\end{figure}

Figures \ref{fig:price_chi0} and \ref{fig:price_chi1} show that the price distribution is the same for the two risk aversion levels considered. This is caused by the tight constrains imposed to control the price and indexing it to the pool price. Indeed we can observe how the pool and the retail price follow analogous trajectories. Note that for some periods, the retailer is able to offer tariffs on average below the spot price. This is because part of its energy has been already purchased at fixed prices (below the spot price at many hours) through forward based or PPA contracts. The resulting total demand from the consumers (Figure \ref{Pool_demand}) is also the same for the two risk aversion levels considered. It follows the same trend that the retail price, as it linearly depends on it. 

\begin{figure}[H]
    \centering
    \includegraphics[width=0.8\textwidth]{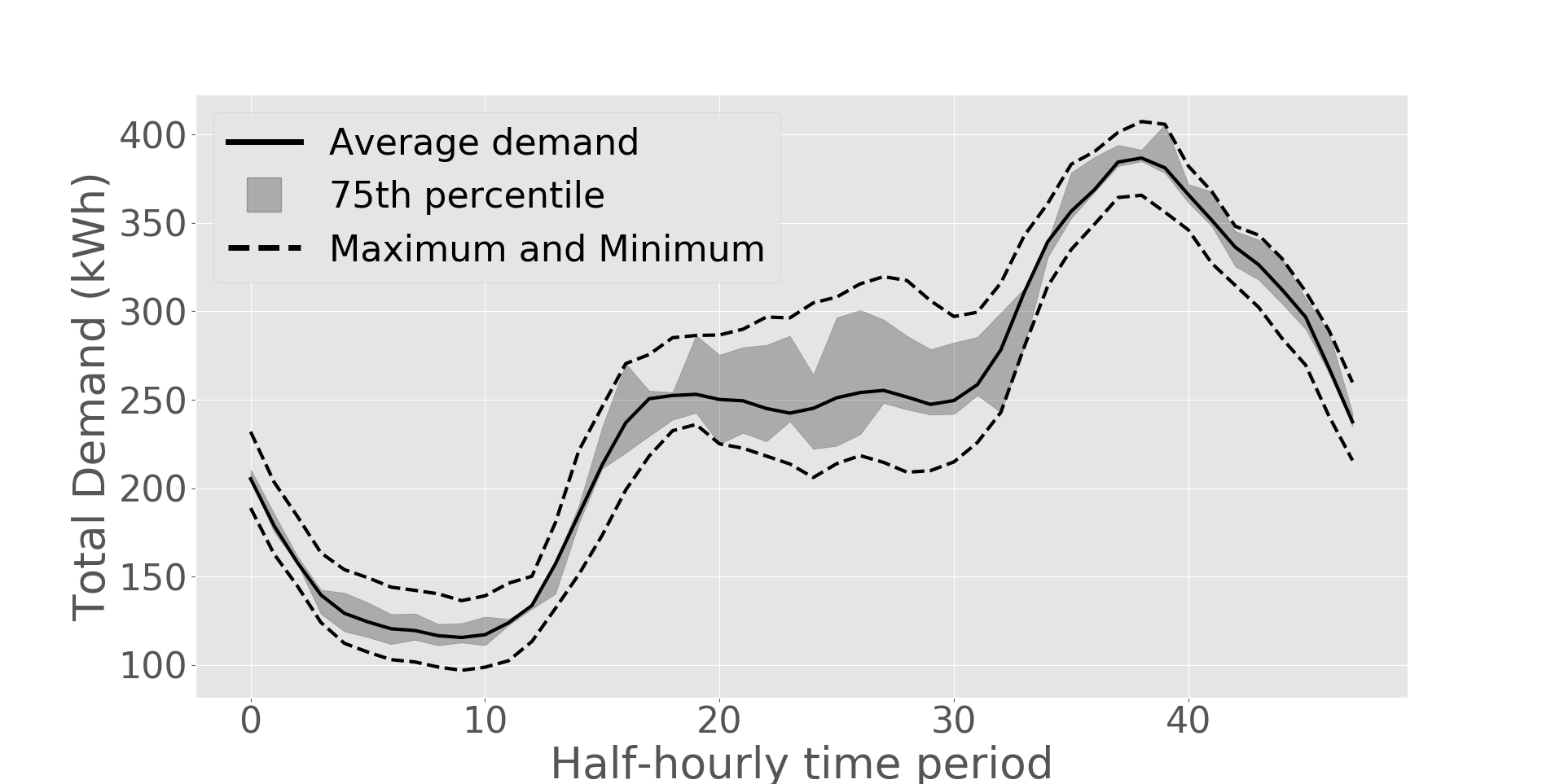}
    \caption{Energy demand.}
    \label{Pool_demand}
\end{figure}

To exemplify this effect, we remove from the problem constraints \eqref{PCPV_sto_08} and \eqref{PCPV_sto_09}, which represent market regulation to control retail prices, and we rerun the simulation to regenerate Figures \ref{fig:price_chi0_freeprice} and \ref{fig:price_chi1_freeprice}.

\begin{figure}[H]
  \centering
  \includegraphics[width=0.8\linewidth]{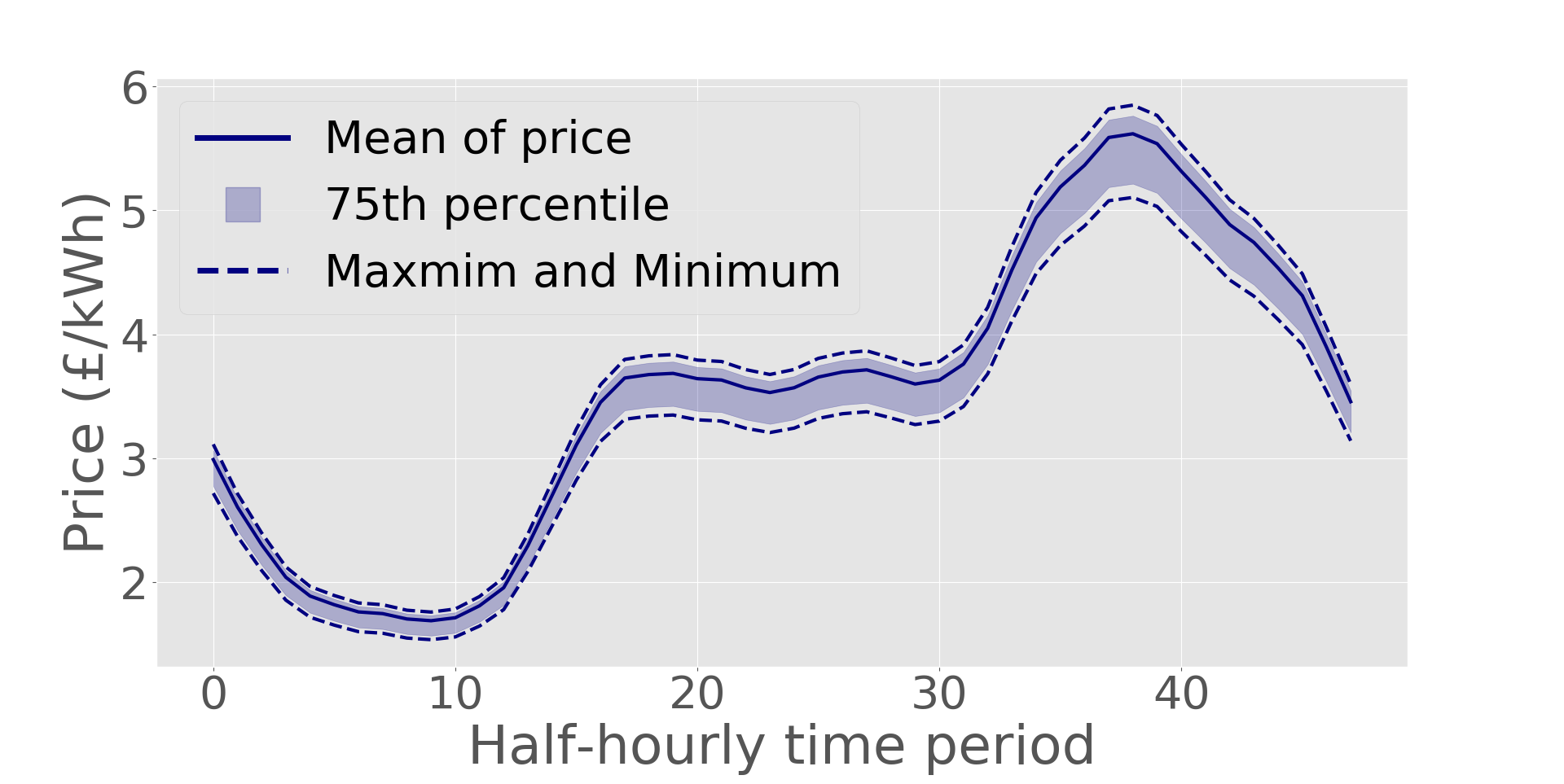}
  \caption{Price distribution for $\chi = 0.0001$ (risk-neutral), no direct constraints on retail prices.}
  \label{fig:price_chi0_freeprice}
\end{figure}
\begin{figure}[H]
  \centering
  \includegraphics[width=0.8\linewidth]{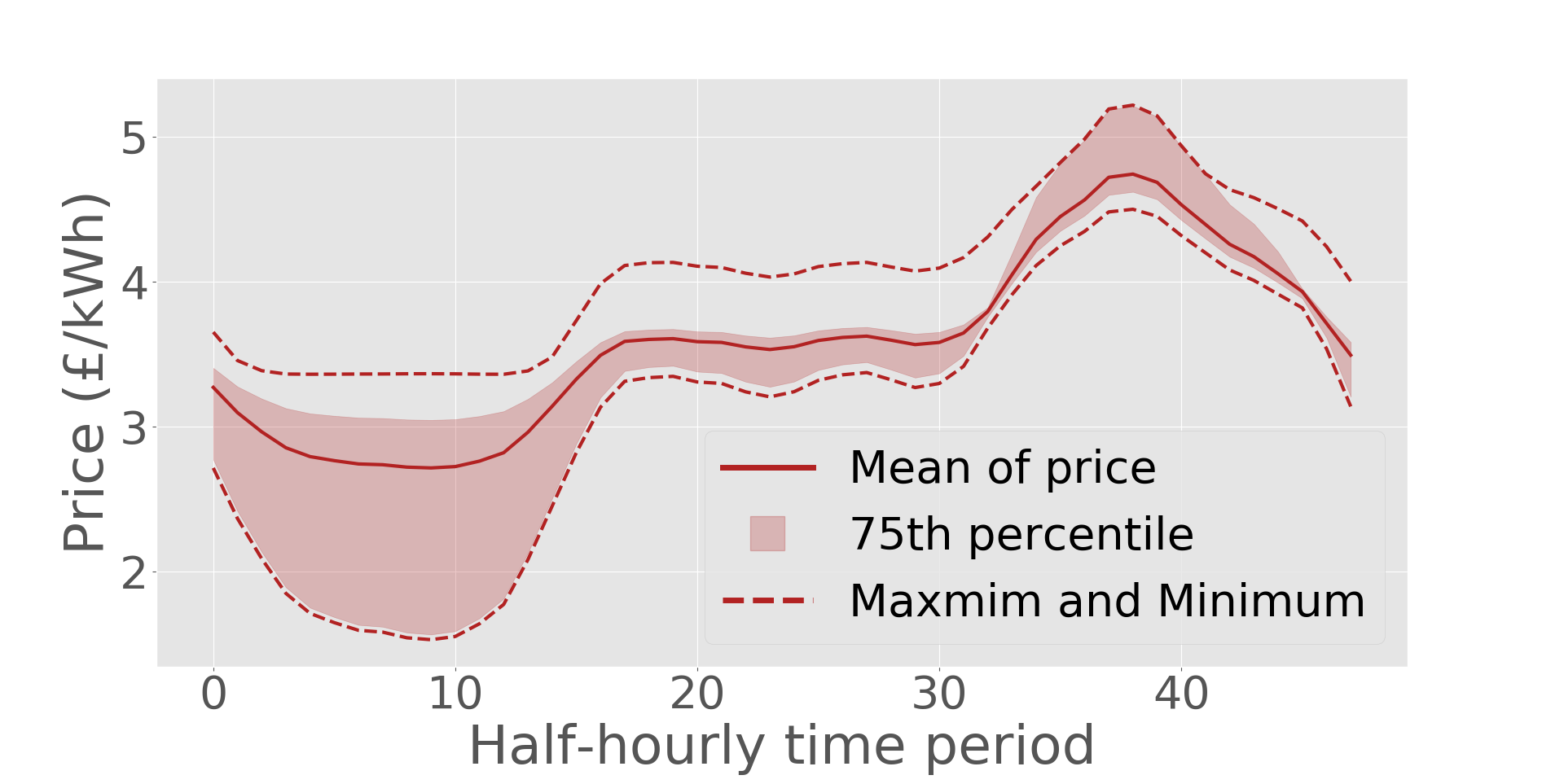}
  \caption{Price distribution for $\chi = 0.9999$ (risk-averse), no direct constraints on retail prices.}
  \label{fig:price_chi1_freeprice}
\end{figure}
The prices are now unrealistically high as the retailer would have a very strong position in the market to raise prices (market power). However, an interesting results is that the retailer would have sufficient control over the prices for it to be an effective variable to control risk. This behaviour would be expected with groups of consumers with enough price-elasticity so that external price regulation (constraints) are no longer needed. This particular aspect is studied in detail in Section \ref{beta_shift}.

In the following we analyze how risk is effectively controlled by the retailer through the appropriate involvement in the different types of contracts that are available.

\subsubsection{Profit distributions}
The profit cumulative distribution functions (CDFs) over the scenarios for the different values of $\chi$ are represented in figure \ref{fig:CDF}.
\begin{figure}[H]
    \centering
    \includegraphics[width=\textwidth]{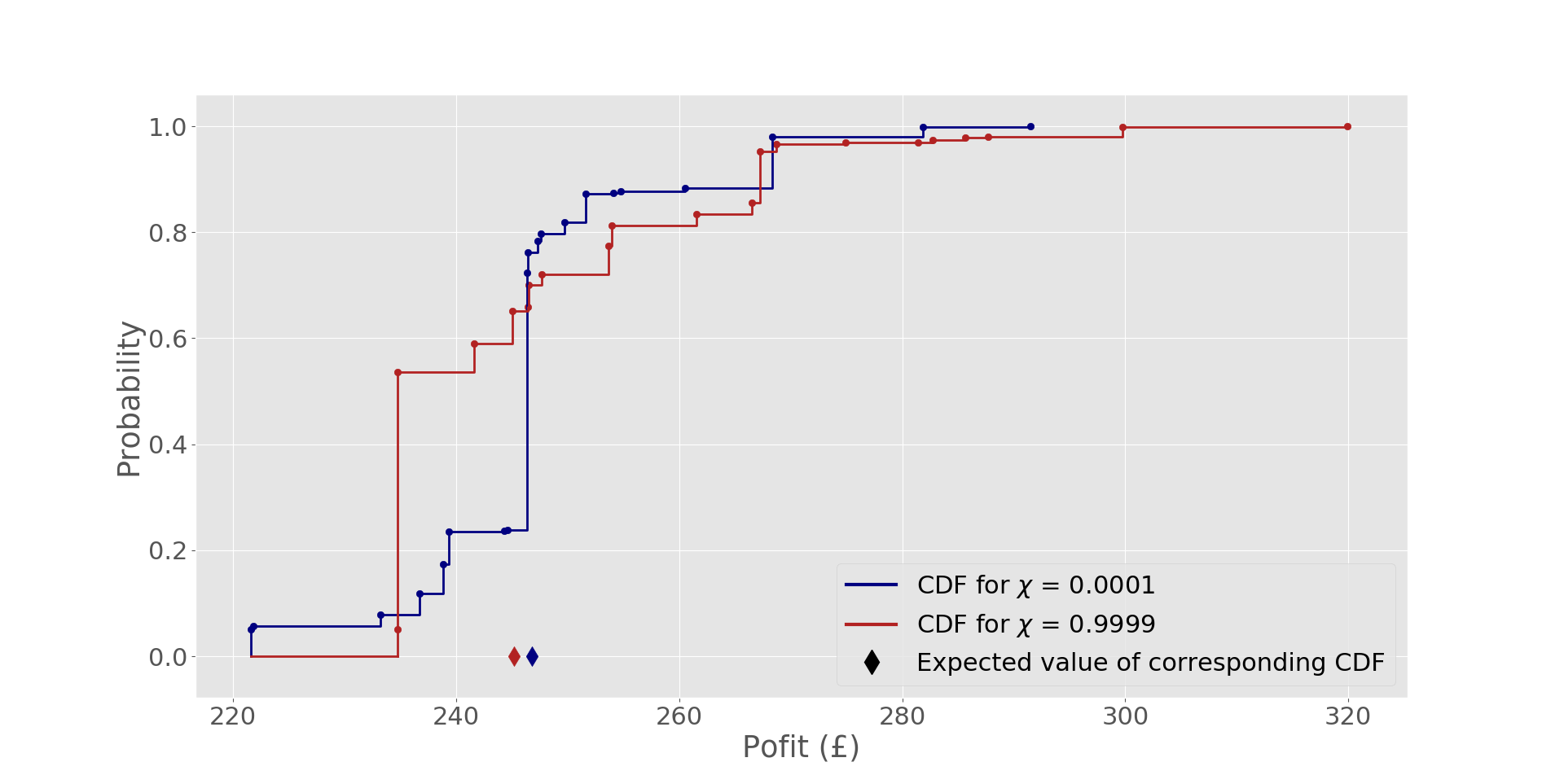}
    \caption{CDF of the profit distributions for $\chi = 0.0001$ and $\chi = 0.9999$}
    \label{fig:CDF}
\end{figure}
As expected, when risk aversion is considered, the left tail of the profit distribution moves to the right but the expected profit is reduced. However, it is interesting to see that a side effect of moving the left tail to the right also increases the profits in some of the scenarios to higher values than any of the profits of the distribution for $\chi = 0.0001$. This is surprising because usually, when risk becomes a factor, the CDF of said distribution compresses over the x axis with respect to the CDF without risk considerations. Nonetheless, there is nothing preventing this behaviour from arising as the CVaR only restricts the left tail of the profit distribution.

\subsubsection{Impact on contract prices}
We now investigate the decisions the retailer makes involving the amount of energy purchased from both the forward base contract and the PPA contract ($p^{\rm B}$ and $p^{\rm PPA}$) depending on the prices of said contracts ($\bar{\lambda}^{\rm B}$ and $\bar{\lambda}^{\rm PPA}$), and the risk aversion level. The results are presented in figures \ref{fig:pb_chi0}, \ref{fig:pppa_chi0}, \ref{fig:pb_chi1} and \ref{fig:pppa_chi1} as the percentage of energy bought with respect to the maximum available of the contract analyzed. Darker color means higher percentage. The actual percentage is represented within each cell.

For the risk neutral case (Figure \ref{fig:pb_chi0}), the forward base contract $p^{\rm B}$ is only attractive if its price is below 4.59 p/kWh. In this case it is fully contracted regardless of the price offered by the renewable-based PPA contract $p^{\rm PPA}$. Similarly, the PPA contract is signed at is maximum capacity if price is below 4.81 p/kWh (Figure \ref{fig:pppa_chi0}), regardless of the price offered by the forward base contract. This threshold price is above the previous one because the PPA contract with a solar PV producer provides energy to the retailer in the central part of the day, which has associated higher pool prices (see Fig. \ref{Pool_solar}). On the other hand, the forward contract is only competitive at lower prices because is a base contract which provides energy in every period of the day, including night periods with low pool prices.  

\begin{figure}[H]
    \centering
    \includegraphics[width=0.7\textwidth]{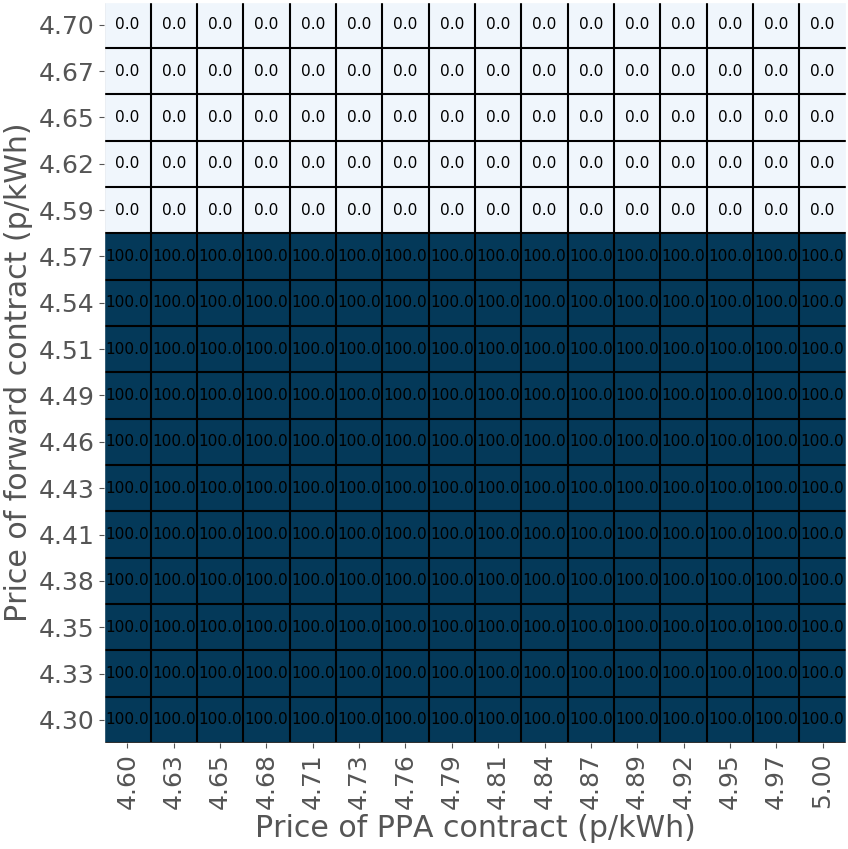}
    \caption{Percentage of $p^{\rm B}$ contracted depending on prices of contracts for $\chi = 0.0001$ (risk-neutral).}
    \label{fig:pb_chi0}
\end{figure}
\begin{figure}[H]
    \centering
    \includegraphics[width=0.7\textwidth]{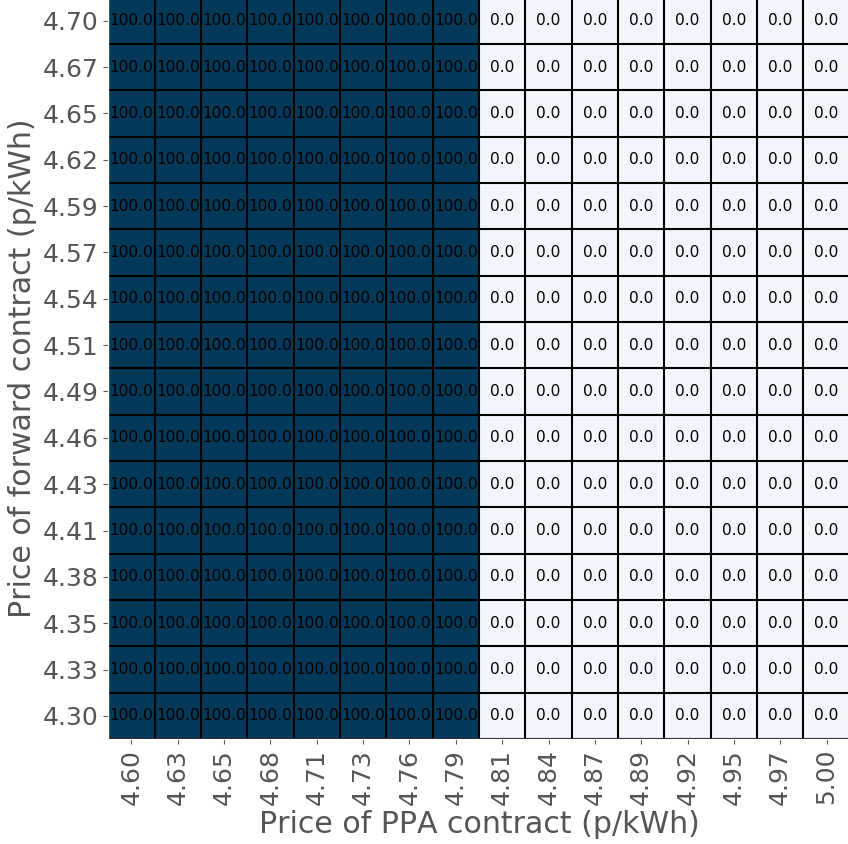}
    \caption{Percentage of $p^{\rm PPA}$ contracted depending on prices of contracts for $\chi = 0.0001$ (risk-neutral).}
    \label{fig:pppa_chi0}
\end{figure}

In the risk averse case, the forward base contract $p^{\rm B}$ (Fig. \ref{fig:pb_chi1}) is signed at its maximum capacity if its price is below 4.43 p/kWh, which is below the price-threshold observed in the risk neutral case. However, for prices above this value, this contract is also partially contracted if the prices of the PPA are relatively high. Similarly, the PPA contract (Figure \ref{fig:pppa_chi1}) is signed at is maximum capacity if its price below 4.63 p/kWh, which also below the price-threshold observed in the risk neutral case. We can also observe that for prices above 4.63 p/kWh, the PPA is also contracted (partially) when the forward contract price is high. This diversification strategy, which is characteristic of risk averse settings, is more present for PPA contracting, as it introduces further levels of uncertainty due to its dependency on the renewable source. Overall, the risk-averse forward and PPA contracting volumes are decreased and used only if the prices are sufficiently attractive (small). This strategy helps avoiding potential unfavourable scenarios where forward contracting may exceed the demand realization, and hence incur in low profits.

\begin{figure}[H]
    \centering
    \includegraphics[width=0.7\textwidth]{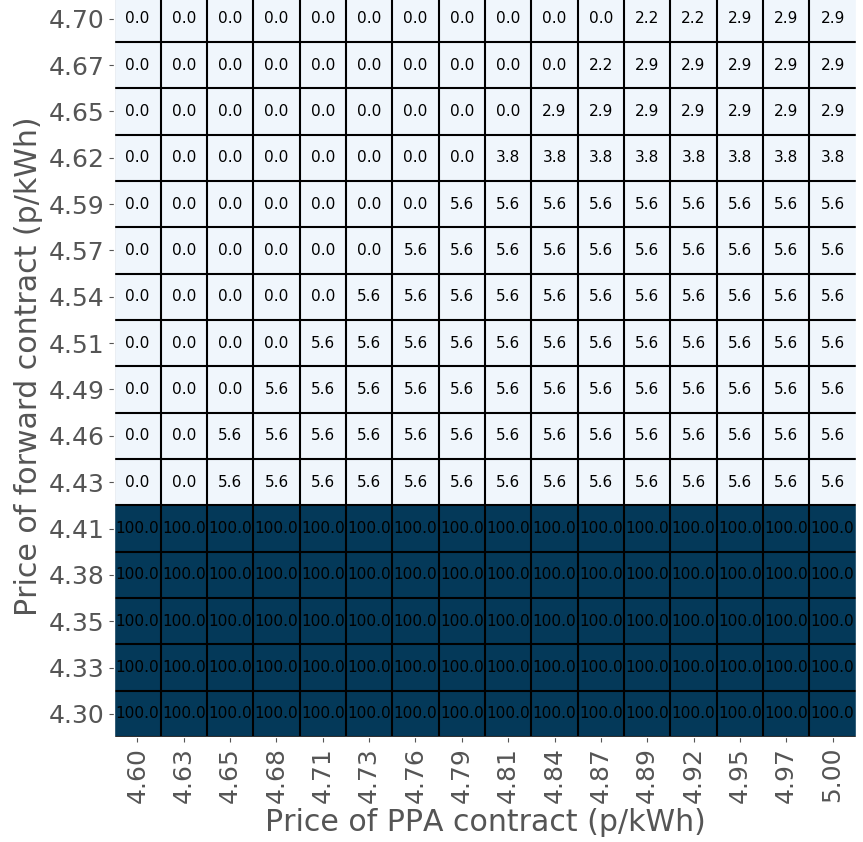}
    \caption{Percentage of $p^{\rm B}$ contracted depending on prices of contracts for $\chi = 0.9999$ (risk-averse).}
    \label{fig:pb_chi1}
\end{figure}
\begin{figure}[H]
    \centering
    \includegraphics[width=0.7\textwidth]{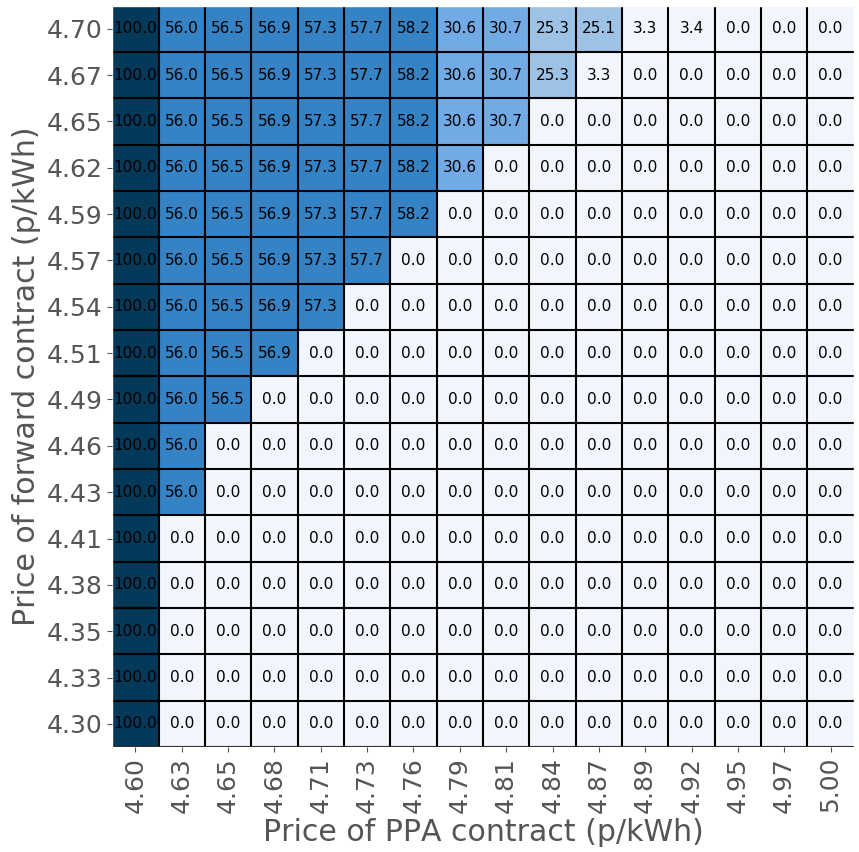}
    \caption{Percentage of $p^{\rm PPA}$ contracted depending on prices of contracts for $\chi = 0.9999$ (risk-averse).}
    \label{fig:pppa_chi1}
\end{figure}

\subsubsection{Effects of shifts on the mean of the price coefficient distribution}\label{beta_shift}

We now tackle the effects of modifying the price coefficient to simulate more sensitive costumers to price signals. We seek to recreate the actual tendency of electrical systems where consumers are becoming more and more elastic to prices thanks to the technological advances provided within the smart grid paradigm (smart-meters, electric-vehicles and local electricity storage, intelligent appliances, distributed generation, etc.), and its impact on retailers' strategies.

For this purpose we modify the distribution of $\beta_1$ (assuming again this represents the price coefficient) given by \eqref{price_dist} by adding a shift term $\beta_{\text{shift}}$ as in:
\begin{equation}
    \hat{\beta}_1 \to \mathcal{N}\left(\mathbb{E}[\hat{\beta}_1] - \beta_{\text{shift}}, \sigma^2_\epsilon\sum_{j=1}^n[(\mathbf{X}^T\mathbf{X})^{-1}\mathbf{X}^T]^2_{1j}\right)
\end{equation}
Note that the variance of the distribution is kept the same and only the mean is shifted. This would be equivalent to having a hypothetical dataset with a greater impact of the price coefficient while maintaining the accuracy of the estimation of the coefficient's distribution. We now consider 2 effects, the impact on the average price \eqref{eq:average_price}, taken to be a scalar summary of the price distribution, and the impact on the profit distribution over the scenario set.
\begin{equation}\label{eq:average_price}
    \text{average price} = \frac{1}{|T|}\sum_{t \in T}\sum_{\omega \in \Omega}\pi_{\omega}\lambda^{\rm R}_{t,\omega}
\end{equation}

The first result is that the impact on the average price as a function of $\beta_{\text{shift}}$ under the current formulation \eqref{risk_CVaR} is almost negligible, as it remains constant at a value of for both risk-neutral and risk-averse cases of 4.56 p/kWh. This is caused by the tight constraints that regulate the retail price. However, the effects on the profit distribution are more interesting and are represented as violin plots in Figure \ref{fig:violin}. It is clear that as $\beta_{\text{shift}}$ increases (higher price-sensitivity of consumers), the expected profit decreases, albeit slightly. On the other hand, the distribution does not change as a function of $\beta_{\text{shift}}$ on this problem, as retail prices do not vary the main cause for this decrease is a reduction in the overall consumers demand.
\begin{figure}[H]
    \centering
    \includegraphics[width=\textwidth]{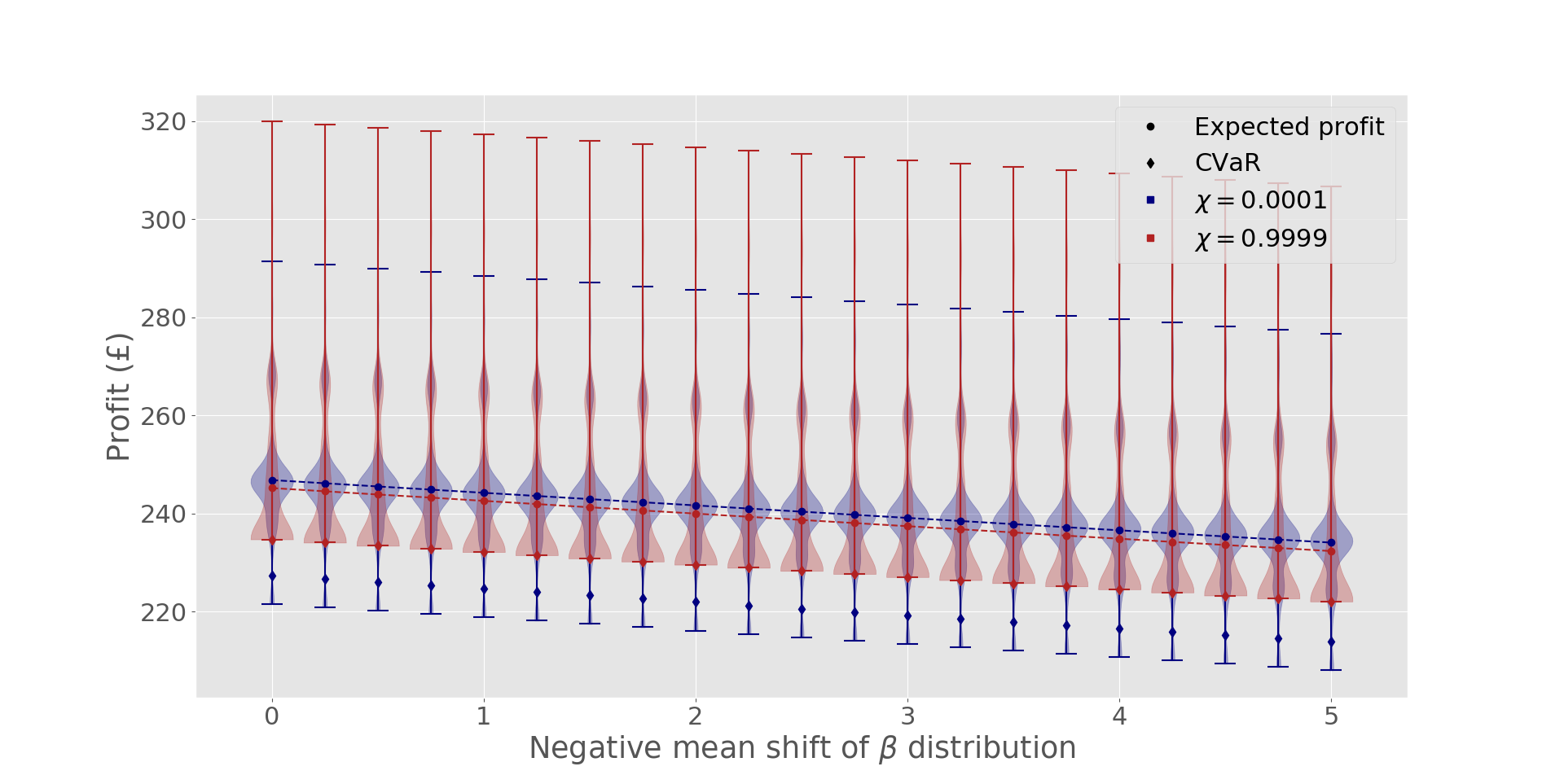}
    \caption{Violin plots showing profit distributions for both values of $\chi$.}
    \label{fig:violin}
\end{figure}

However, now that we are essentially increasing the impact of price-demand sensitivity ($\beta_1$), we can explore if for larger values of this parameter the retail price can be self-regulated by consumers and no external market rules are needed, i.e., verify if constraints \eqref{PCPV_sto_08}-\eqref{PCPV_sto_10} could be dropped from the model. For this purpose we first analyze for which values of $\beta_{\text{shift}}$ the self-regulation of prices is sufficient (keeping in mind that $\mathbb{E}[\hat{\beta}_1] = -0.36$). 
We also choose a `reasonable' boundary for the price by considering the mean of the actual pool prices from the dataset, which is 13.6 p/kWh, and by assuming that prices has to be lower than double this price.

Figure \ref{fig:prices} represents how the average retail price varies with $\beta_{\text{shift}}$ for risk-neutral and risk-averse retailers when solving problem \eqref{risk_CVaR} where the price-regulating constraints \eqref{PCPV_sto_08}-\eqref{PCPV_sto_10} are dropped. We see how retail prices self-regulate as the impact of $\beta_1$ increases (higher degree of price-sensitivity by consumers). Note that prices start being ``reasonable'' around a value of $\beta_{\text{shift}} \approx 4.75$. 
\begin{figure}[H]
    \centering
    \includegraphics[width=\textwidth]{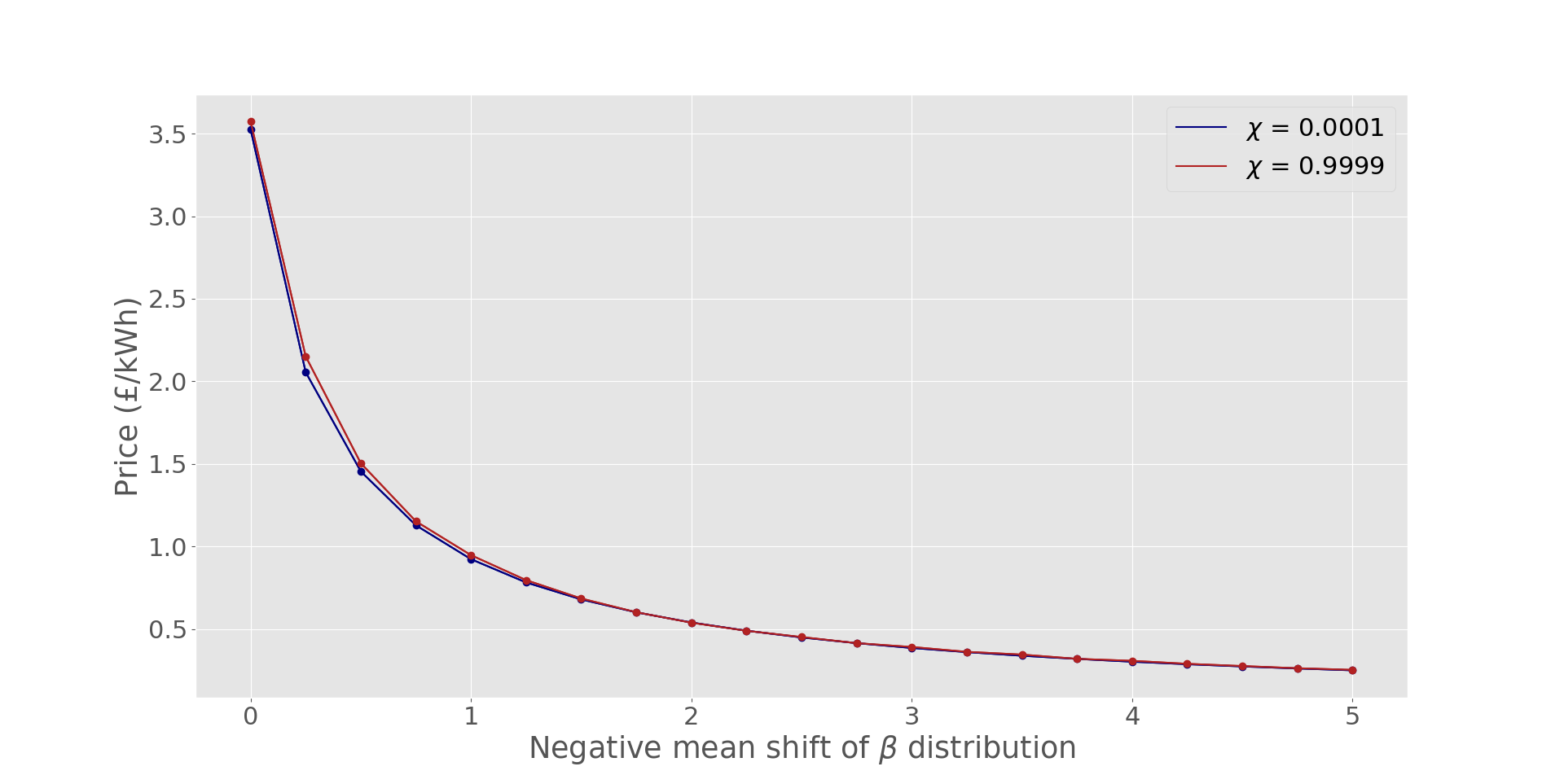}
    \caption{Plot of average price for both values of $\chi$.}
    \label{fig:prices}
\end{figure}

Having identified appropriate values of $\beta_{\text{shift}}$ to self regulate the retail prices, we now explore the profit distribution for the problem with equations (\ref{PCPV_sto_08})-(\ref{PCPV_sto_10}) dropped. As can be seen in Figure \ref{fig:profit_dist_free} the expected profits exhibit a similar behaviour to the retail prices in Figure \ref{fig:prices}. However the behaviour is much richer than that of Figure \ref{fig:violin} as not only does the profit vary non-linearly, but, as can be seen in subfigures \ref{fig:sub2_2} and \ref{fig:sub3} the profit distribution also varies (we illustrate the cases where $\beta_{\text{shift}}$ equals 0, 4.75 and 5). It might be tempting to compare the profits from figure \ref{fig:profit_dist_free} to those of \ref{fig:violin}, however the average price for $\beta_{\text{shift}} = 5$ is still around 25 p/kWh (now it does depend on $\chi$) while, as mentioned previously, average price for the restricted problem was 4.56 p/kWh. To mimic this price, $\beta_{\text{shift}}$ must be increased all the way up to 52 where the average price differs now in less than $1\%$. The results of increasing $\beta_{\text{shift}}$ to 52 are shown in table \ref{table:2}, implying that the problem without restrictions performs much better for the retailer as the profits reached are much higher than those of $\beta_{\text{shift}} = 0$ for the restricted problem.
\begin{table}[H]
\centering
\begin{tabular}{|l|l|l|}
\hline
                   & $\chi = 0$ & $\chi = 1$ \\ \hline
Free problem       & 530.34     & 514.01     \\ \hline
Restricted problem & 137.64     & 132.54     \\ \hline
\end{tabular}
\caption{Expected profits in pounds for the different problems and the different values of $\chi$}
\label{table:2}
\end{table}

\begin{figure}
\begin{subfigure}{\linewidth}
\centering
\includegraphics[width=\textwidth]{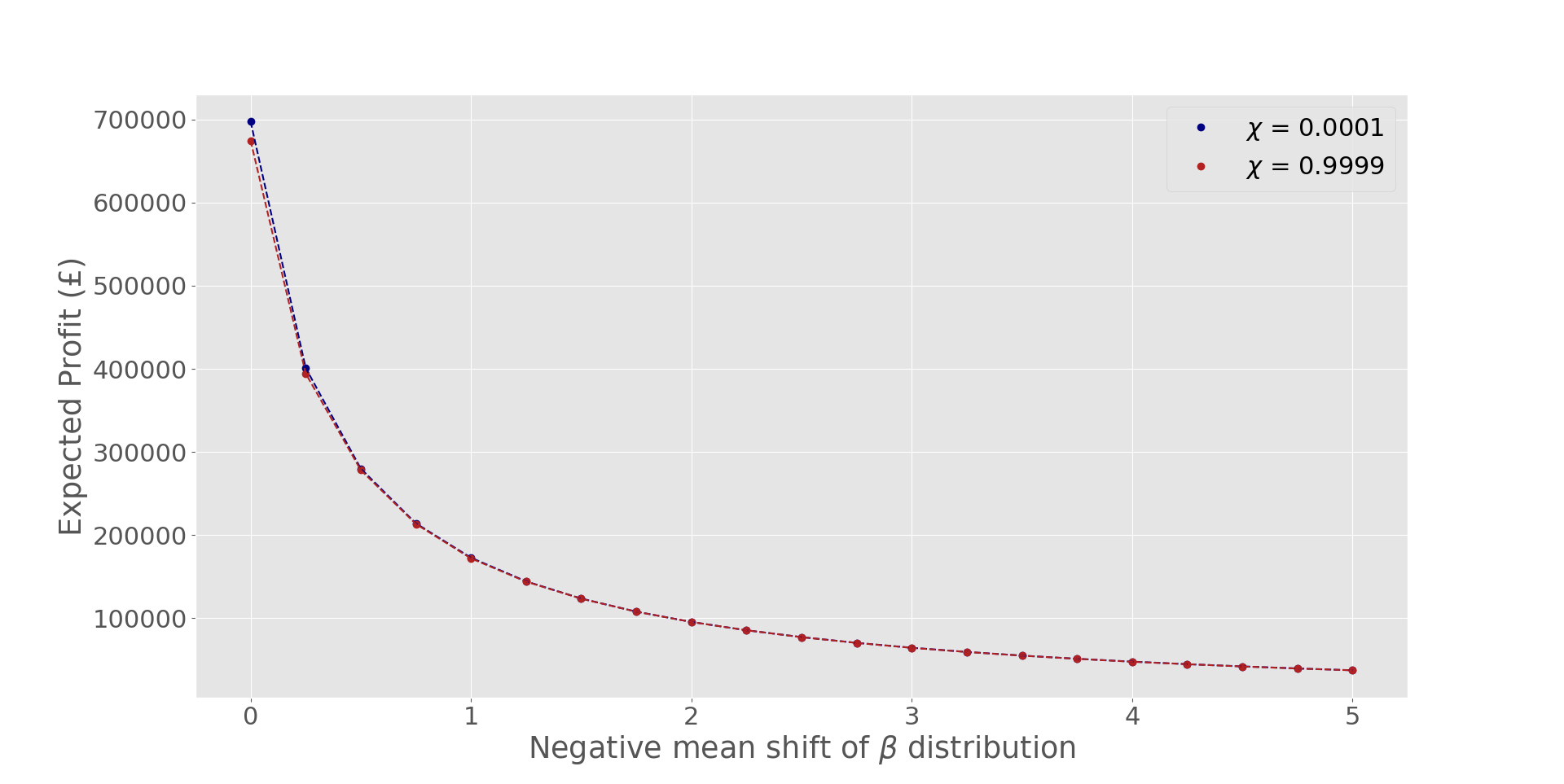}
\caption{}
\label{fig:sub1_1}
\end{subfigure}\\[1ex]
\begin{subfigure}{.5\linewidth}
\centering
\includegraphics[width=\textwidth]{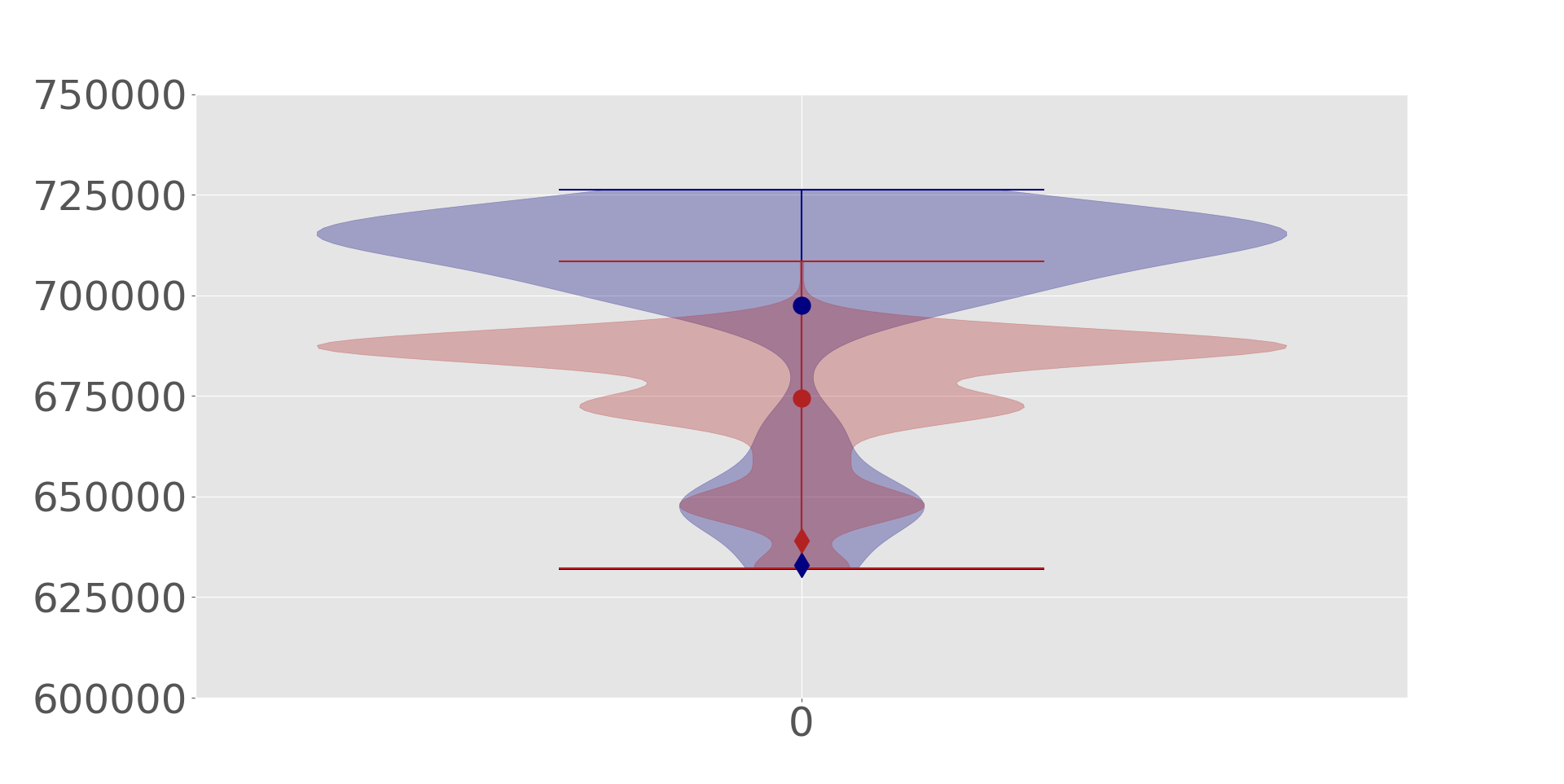}
\caption{}
\label{fig:sub2_2}
\end{subfigure}
\begin{subfigure}{.5\linewidth}
\centering
\includegraphics[width=\textwidth]{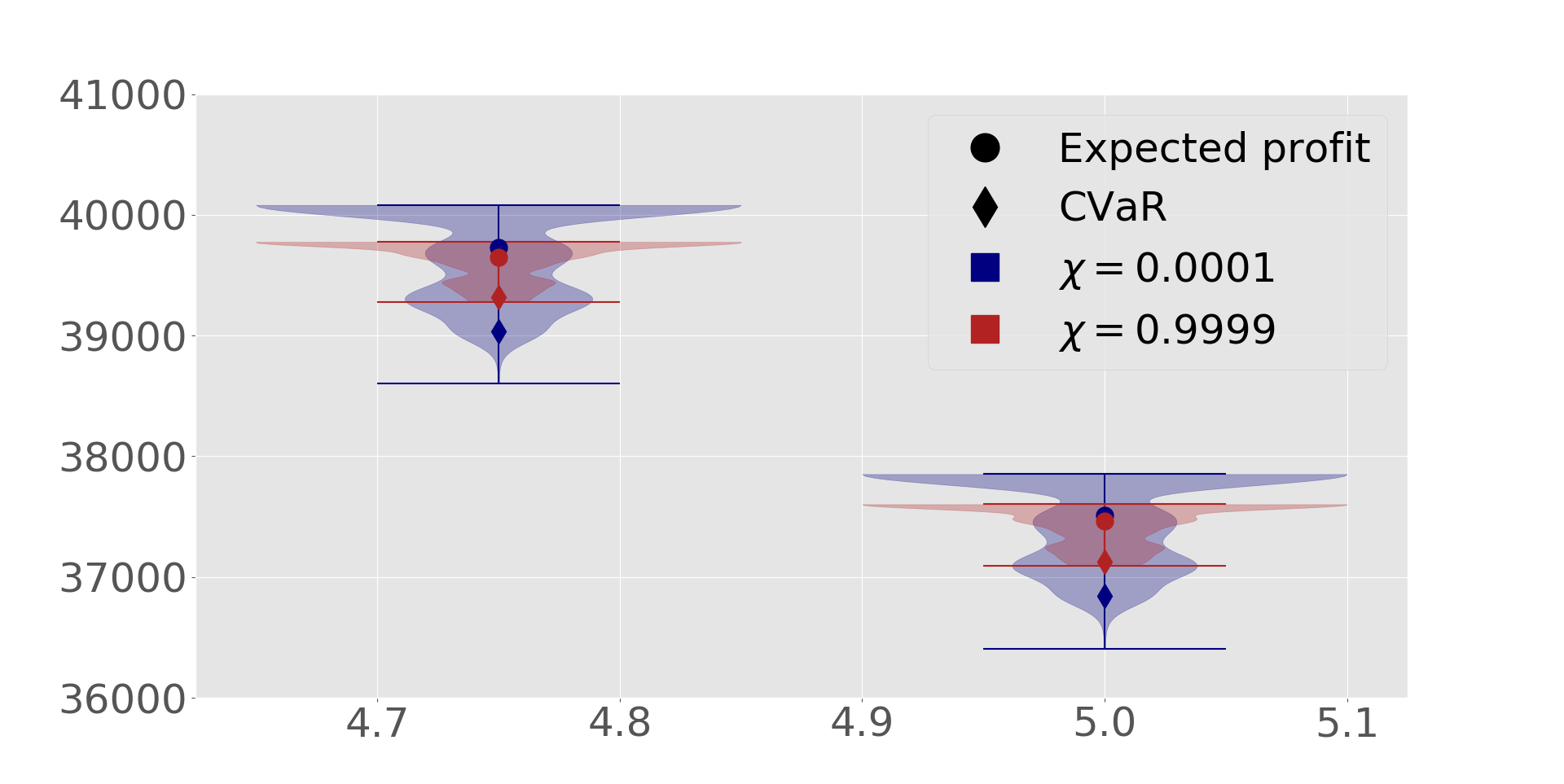}
\caption{}
\label{fig:sub3}
\end{subfigure}
\caption{Expected profit as a function of $\beta_{\text{shift}}$ (a). Violin plot for $\beta_{\text{shift}} = 0$ (subfigure (b)), and for $\beta_{\text{shift}} = 4.75$ and $\beta_{\text{shift}} = 5$ (subfigure (c)).}
\label{fig:profit_dist_free}
\end{figure}

\section{Closing remarks and future work}
\label{chap:conclusiones}

Although the dataset has been a limiting factor throughout this whole work because of being only slightly dependent on price, we have managed to formulate a restricted problem which could be adapted to these hypothetical low-sensitivity customers which produces both highly-competitive prices and safe profits for the retailer. We have also successfully included both stochasticity surging from data-driven scenario generation and the CVaR as a risk measure to increase the retailer's profit in the worst-case scenarios. Nonetheless, we believe that this low impact of the price variable stems only from the scarce options offered to retailers in the ToU tariff implemented in London in 2013 and that future datasets produced with more price options (or continuous prices) would not need these restrictions to formulate an economically viable problem. For this reason, we have artificially increased the sensitivity of customers to construct a simpler problem with fewer restrictions as the price can now be regulated only by means of the price sensitivity. This produces a much richer problem, both in expected profit for the retailer and in theoretical interest. 

The numerical results obtained from the case study indicate that risk hedging can be efficiently performed through the energy procurement strategy of the retailer. In this manner, the profit in worst scenarios can be increased at expenses of reducing slightly the total expected profit. However, the selling price offered to the clients is not influenced by the risk-aversion degree of the retailer in the analyzed case. 
It is interesting to note that the forward (base) contract is signed for a contract price smaller than that associated with the PPA contract in the risk-neutral case. In other words, the retailer is willing to pay a higher price for a PPA contract with uncertain production if this contract allows it to reduce the purchase of energy in pool during the central periods of the day, which are characterized by higher pool prices.
In this case, forward and PPA contracts are not signed for prices higher than 4.57 and 4.79 p/kWh, respectively. 
However, if risk is accounted for, the retailer purchases small amounts of energy from forward and PPA contracts even for high prices, up to 4.70 and 4.92 p/kWh for forward and PPA contracts, respectively.

It has been also observed that the price offered by the retailer is highly dependent on sensitivity of the costumers to prices. Assuming that the switch of electricity supplier by consumers is not considered in this model, a low price elasticity may lead to unreasonable results if the selling price offered by the retailer is not constrained. However, if the price elasticity of consumers is high enough, the retail price is reasonable even though this price is not bounded.

Future work is focused on considering simultaneously the sensitivity of consumers to prices and the possibility of switching supplier. Additionally, more complex price tariffs may be investigated to increase the profitability of the retailer under price-sensitive consumers.  

\section*{Acknowledgment}
The authors gratefully acknowledge the financial support from the Spanish government through projects PID2020-116694GB-I00 and PID2019-111211RBI00/
AEI/10.13039/501100011033, and from the Madrid Government (Comunidad de Madrid) under the Multiannual Agreement with UC3M in the line of ``Fostering Young Doctors Research'' (ZEROGASPAIN-CM-UC3M), and in the context of the V PRICIT (Regional Programme of Research and Technological Innovation.

\bibliographystyle{unsrt}      
\bibliography{Bibliography.bib}   

\section*{Appendices}
\begin{appendix}
\section{Heuristic argument for the usage of the Hajek-Sidak CLT}
\label{Anexo:CLT}

\begin{theorem}[Hajek-Sidak Central Limit Theorem \cite{CLT}]
Let $X_1,\dots, X_n$ be independent and identically distributed (i.i.d.) random variables such that $\mathbb{E}[X_i] = \mu$ and $Var[X_i] = \sigma ^2$ are both finite.
Define $Tn = d_1X_1 + \dots + d_nX_n$, then:
\begin{equation}
  \frac{T_n-\mu \sum d_i}{\sigma \sqrt{\sum d_i^2}} \to \mathcal{N}(0,1)  
\end{equation}
whenever the Noether condition:
\begin{equation}
    \frac{\text{max}_{1\leq i\leq n}d_i^2}{\sum_{i=1}^n d_i^2} \to 0, \quad as \quad n \to \infty
\end{equation}
is satisfied.
\end{theorem}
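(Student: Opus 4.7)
The plan is to reduce the Hajek-Sidak statement to the Lindeberg-Feller CLT for triangular arrays, which is available as a ``black box.''  First I would recenter by setting $Z_i = X_i - \mu$, so that the $Z_i$ are i.i.d.\ with mean $0$ and variance $\sigma^2$, and observe that
$$
\frac{T_n - \mu \sum_{i=1}^n d_i}{\sigma \sqrt{\sum_{i=1}^n d_i^2}} \;=\; \sum_{i=1}^n Y_{n,i}, \qquad \text{where } Y_{n,i} \;=\; \frac{d_i Z_i}{\sigma \sqrt{\sum_{j=1}^n d_j^2}}.
$$
This rewrites the left-hand side as the row sum of a triangular array. A short computation gives $\mathbb{E}[Y_{n,i}] = 0$ and $\mathbb{V}\text{ar}[Y_{n,i}] = d_i^2 / \sum_{j=1}^n d_j^2$, so $\sum_{i=1}^n \mathbb{V}\text{ar}[Y_{n,i}] = 1$, which is the normalization required by Lindeberg-Feller.

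Next I would verify Lindeberg's condition: for every $\varepsilon > 0$,
$$
L_n(\varepsilon) \;=\; \sum_{i=1}^n \mathbb{E}\!\left[Y_{n,i}^2 \, \mathbf{1}_{\{|Y_{n,i}| > \varepsilon\}}\right] \;\longrightarrow\; 0.
$$
Substituting the definition of $Y_{n,i}$ and using that the $Z_i$ are i.i.d.\ with the same distribution as $Z_1$, this becomes
$$
L_n(\varepsilon) \;=\; \frac{1}{\sigma^2} \sum_{i=1}^n \frac{d_i^2}{\sum_{j=1}^n d_j^2} \, \mathbb{E}\!\left[Z_1^2 \, \mathbf{1}_{\{|Z_1| > \varepsilon \sigma \sqrt{\sum_{j=1}^n d_j^2}/|d_i|\}}\right].
$$
The key step is to use the Noether condition to bound the threshold from below uniformly in $i$. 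Since $|d_i| \le \max_{1\le k \le n} |d_k|$, the indicator event is contained in $\{|Z_1| > \varepsilon \sigma \sqrt{\sum_j d_j^2}/\max_k |d_k|\}$, and by the Noether condition the threshold $\varepsilon \sigma \sqrt{\sum_j d_j^2}/\max_k |d_k|$ tends to $+\infty$. Therefore each expectation is bounded by $\mathbb{E}[Z_1^2 \, \mathbf{1}_{\{|Z_1| > c_n\}}]$ with $c_n \to \infty$, which vanishes by dominated convergence (since $\mathbb{E}[Z_1^2] = \sigma^2 < \infty$). Pulling this uniform bound out of the sum and using $\sum_i d_i^2/\sum_j d_j^2 = 1$ gives $L_n(\varepsilon) \to 0$.

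Having verified the Lindeberg condition, the Lindeberg-Feller CLT yields $\sum_{i=1}^n Y_{n,i} \to \mathcal{N}(0,1)$ in distribution, which is exactly the stated conclusion. The main obstacle is the step where the Noether condition is converted into a uniform-in-$i$ tail bound; once that bound is in place, everything else reduces to standard dominated convergence and the Lindeberg-Feller machinery. I would assume Lindeberg-Feller as a known result (it can be cited from any standard probability text) rather than reprove it via characteristic functions, since otherwise the argument becomes substantially longer without adding insight specific to Hajek-Sidak.
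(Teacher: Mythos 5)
Your proposal is correct, but note that the paper does not actually prove the Hajek--Sidak theorem: it states it as a known result with a citation and reserves the appendix for a separate (heuristic) argument that the Noether condition holds for the particular design matrix $n^{-1}(\mathbf{X}^T\mathbf{X})$, via eigenvalue bounds on the matrix entries. So there is no paper proof to compare against; what you have supplied is the standard textbook derivation of Hajek--Sidak from the Lindeberg--Feller CLT for triangular arrays, and it is sound. The decomposition into $Y_{n,i}=d_iZ_i/(\sigma\sqrt{\sum_j d_j^2})$, the verification that the row variances sum to one, and in particular the key step --- converting the Noether condition into a threshold $\varepsilon\sigma\sqrt{\sum_j d_j^2}/\max_k|d_k|\to\infty$ that is uniform in $i$, so that the Lindeberg sum is dominated by a single vanishing tail expectation times $\sum_i d_i^2/\sum_j d_j^2=1$ --- are all handled correctly (the degenerate case $d_i=0$ contributes nothing, and one implicitly needs $\sum_j d_j^2>0$, which the Noether condition presupposes). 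The only substantive remark is that your proof addresses the theorem itself, whereas the paper's appendix addresses the complementary question of whether the theorem's hypothesis is plausibly satisfied by the regression weights $d_i=[(\mathbf{X}^T\mathbf{X})^{-1}\mathbf{X}^T]_{1i}$; the two arguments are independent and together they justify the normal approximation \eqref{price_dist} used for scenario generation.
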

This theorem implies that the normal approximation used is valid as long as the entries of the matrix $n^{-1}(\mathbf{X}^T\mathbf{X})$ can be bounded from above and below as that implies that $\text{max}_{1\leq i\leq n}d_i^2 = \text{max}_{1\leq i\leq n} [n^{-1}(\mathbf{X}^T\mathbf{X})^{-1}\mathbf{X}^T]^2_{ij}$ must also be bounded while the bound from below ensures that when $n \to \infty$, $\sum_{i=1}^n d_i^2 = \sum_{j=1}^n[n^{-1}(\mathbf{X}^T\mathbf{X})^{-1}\mathbf{X}^T]^2_{ij}$ diverges for being an infinite sum of a positive non-zero constant, implying that the Noether condition is satisfied. To bound the entries of the matrix $n^{-1}(\mathbf{X}^T\mathbf{X})$ its eigenvalues are used. We now prove the upper bound for the matrix entries given that an upper bound for the maximum eigenvalue of $n^{-1}(\mathbf{X}^T\mathbf{X})$ exists.\\

\begin{theorem}
Let $\lambda_{\text{max}}<\infty$ be the maximum eigenvalue of a given unitarily diagonalizable matrix $\mathbf{A} \in \mathbb{R}^{k \times k}$, then $|\mathbf{A}_{ij}| \leq \kappa $ where $\kappa$ is some undetermined constant.
\end{theorem}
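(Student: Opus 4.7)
The plan is to exploit the unitary diagonalization directly and reduce the entrywise bound to a bound on the spectral norm, which for unitarily diagonalizable (i.e., normal) matrices coincides with the largest eigenvalue modulus. Writing $\mathbf{A} = \mathbf{U}\boldsymbol{\Lambda}\mathbf{U}^{*}$ with $\mathbf{U}$ unitary and $\boldsymbol{\Lambda} = \mathrm{diag}(\lambda_1,\dots,\lambda_k)$, an arbitrary entry can be expressed as $\mathbf{A}_{ij} = e_i^{T}\mathbf{A} e_j$, where $e_i,e_j$ are standard basis vectors of unit Euclidean norm.

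First I would apply the Cauchy--Schwarz inequality together with the standard operator-norm bound: $|\mathbf{A}_{ij}| = |e_i^{T}\mathbf{A} e_j| \leq \|e_i\|_2 \,\|\mathbf{A} e_j\|_2 \leq \|\mathbf{A}\|_2$. This step is routine and only uses the definition of the spectral norm. Second, I would invoke the identity $\|\mathbf{A}\|_2 = \max_{l}|\lambda_l|$, which holds for every normal matrix (in particular for any unitarily diagonalizable $\mathbf{A}$) because the singular values of such a matrix are precisely $|\lambda_1|,\dots,|\lambda_k|$. Combining both steps yields $|\mathbf{A}_{ij}| \leq \max_l |\lambda_l|$, and since by hypothesis this maximum is finite, one may take $\kappa := \max_l |\lambda_l|$. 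In the paper's application $\mathbf{A} = n^{-1}\mathbf{X}^{T}\mathbf{X}$ is positive semi-definite, so all eigenvalues are non-negative and $\max_l |\lambda_l| = \lambda_{\max}$, which is precisely the form needed downstream.

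As an alternative (and perhaps more transparent) route, one could skip the spectral norm altogether and expand the entry coordinatewise: $\mathbf{A}_{ij} = \sum_{l} \lambda_l\, U_{il}\, \overline{U_{jl}}$, then bound in absolute value by $\max_l|\lambda_l| \sum_l |U_{il}||U_{jl}|$, and finally apply Cauchy--Schwarz together with the fact that the rows of the unitary matrix $\mathbf{U}$ have unit $\ell_2$ norm, giving $\sum_l |U_{il}||U_{jl}| \leq 1$. This avoids any appeal to operator-norm theory.

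There is no serious obstacle to the argument itself; the only mild subtlety is the distinction between $\lambda_{\max}$ (largest eigenvalue) and $\max_l |\lambda_l|$ (largest absolute eigenvalue). For a general real matrix these may differ, but in the setting where this lemma is ultimately used, $n^{-1}\mathbf{X}^{T}\mathbf{X}$ is symmetric positive semi-definite, so the two quantities coincide and the stated $\kappa$ is well-defined.
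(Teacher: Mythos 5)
Your proof is correct, but it takes a genuinely different route from the paper's. The paper bounds the entries through the Frobenius norm and its unitary invariance: it writes $\|\mathbf{A}\|_F = \|\mathbf{U}\boldsymbol{\Lambda}\mathbf{U}^T\|_F = \|\boldsymbol{\Lambda}\|_F = \bigl(\sum_i |\lambda_i|^2\bigr)^{1/2} \leq \sqrt{k}\,\lambda_{\max}$, and since $\|\mathbf{A}\|_F^2$ is the sum of the squared entries, every $|\mathbf{A}_{ij}|$ is bounded. You instead go through the spectral norm, $|\mathbf{A}_{ij}| = |e_i^T\mathbf{A}e_j| \leq \|\mathbf{A}\|_2 = \max_l|\lambda_l|$, with a coordinatewise Cauchy--Schwarz expansion as a fallback. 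Both arguments rest on the same unitary diagonalization, but yours yields the sharper constant $\max_l|\lambda_l|$ rather than the paper's $\sqrt{k}\,\lambda_{\max}$, and it isolates a subtlety the paper glosses over: the paper's inequality $\bigl(\sum_i|\lambda_i|^2\bigr)^{1/2}\leq\sqrt{k}\,\lambda_{\max}$ is only valid when $\lambda_{\max}$ dominates all eigenvalue moduli, which holds here because $n^{-1}\mathbf{X}^T\mathbf{X}$ is positive semi-definite but is not true for a general real normal matrix with large negative eigenvalues. You handle this correctly by working with $\max_l|\lambda_l|$ throughout and only identifying it with $\lambda_{\max}$ in the positive semi-definite application, which is a cleaner treatment than the paper's.
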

\begin{proof} Since $\mathbf{A}$ is unitarily diagonalizable, $\mathbf{A} = \textbf{U}\boldsymbol{\Lambda}\textbf{U}^T$ and using the frobenius norm:
\begin{equation}
    \norm{\textbf{A}}{}_F = \sqrt{\sum_{i=1}^k \sum_{j=1}^k \left|\textbf{A}_{ij}\right|^2} = \norm{\textbf{U}\boldsymbol{\Lambda}\textbf{U}^T}{}_F = \norm{\boldsymbol{\Lambda}}{}_F = \sqrt{\sum_{i=1}^k\left|\lambda_i\right|^2} \leq \sqrt{k}\lambda_{\text{max}}<\infty
\end{equation}
\end{proof}

\begin{theorem}
Let $\lambda_{\text{min}}>0$ be the minimum eigenvalue of a given unitarily diagonalizable matrix $\mathbf{A}\in \mathbb{R}^{k \times k}$, then $|\mathbf{A}_{ij}| > 0$.
\end{theorem}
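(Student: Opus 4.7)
The plan is to mirror the Frobenius-norm argument used in the proof of the upper-bound theorem. Since $\mathbf{A}$ is unitarily diagonalizable, I would write $\mathbf{A} = \mathbf{U}\boldsymbol{\Lambda}\mathbf{U}^T$ and invoke unitary invariance of $\|\cdot\|_F$ to obtain
\begin{equation*}
\sum_{i=1}^{k}\sum_{j=1}^{k}|\mathbf{A}_{ij}|^2 = \|\mathbf{A}\|_F^2 = \|\boldsymbol{\Lambda}\|_F^2 = \sum_{i=1}^{k}\lambda_i^2 \geq k\,\lambda_{\min}^2 > 0 .
\end{equation*}
This yields a strict positive lower bound on the aggregate squared magnitude of the entries of $\mathbf{A}$, which is the natural analogue of the upper-bound step.

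The main obstacle is that, unlike the upper-bound theorem where a Frobenius bound $\sum|\mathbf{A}_{ij}|^2 \leq k\lambda_{\max}^2$ immediately implies the entrywise bound $|\mathbf{A}_{ij}| \leq \sqrt{k}\,\lambda_{\max}$ (each nonnegative summand is dominated by the sum), a lower bound on the sum does \emph{not} transfer to a lower bound on each individual entry. The identity matrix, with $\lambda_{\min}=1>0$ and vanishing off-diagonal entries, shows that $|\mathbf{A}_{ij}|>0$ for every pair $(i,j)$ is false in the stated generality; the most one can extract from the Frobenius step is that $\mathbf{A}$ has at least one nonzero entry. A salvageable pointwise lower bound applies to the \emph{diagonal} entries via the Rayleigh quotient: since $\lambda_{\min}>0$ together with unitary diagonalizability makes $\mathbf{A}$ symmetric positive definite, one has $\mathbf{A}_{ii} = \mathbf{e}_i^T\mathbf{A}\mathbf{e}_i \geq \lambda_{\min}\,\|\mathbf{e}_i\|^2 = \lambda_{\min}>0$ for each $i$.

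For the downstream application in Appendix \ref{Anexo:CLT} — securing the Noether condition by arguing that $\sum_{j=1}^{n}[n^{-1}(\mathbf{X}^T\mathbf{X})^{-1}\mathbf{X}^T]^2_{ij}$ diverges as $n\to\infty$ — pointwise positivity of every entry is actually stronger than what is needed. A cleaner route would replace the claim by a bound on the spectral norm, $\|\mathbf{A}^{-1}\|_2 = 1/\lambda_{\min}$, combined with the full-rank regularity of $\mathbf{X}$ to control the row norms of $\mathbf{A}^{-1}\mathbf{X}^T$ from below. Making this alternative argument rigorous — rather than fixing the literal entrywise statement — is where I would expect most of the technical work to sit.
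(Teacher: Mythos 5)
Your first display is, word for word, the entirety of the paper's own proof of this theorem: the authors write $\mathbf{A}=\mathbf{U}\boldsymbol{\Lambda}\mathbf{U}^T$, invoke unitary invariance of the Frobenius norm, and conclude $\lVert\mathbf{A}\rVert_F=\sqrt{\sum_i\lvert\lambda_i\rvert^2}\geq\sqrt{k}\,\lambda_{\min}>0$. So you have matched the paper's approach exactly — and your critique of it is correct. A lower bound on $\sum_{i,j}\lvert\mathbf{A}_{ij}\rvert^2$ only shows that \emph{some} entry is nonzero, not that every entry is; the identity matrix ($\lambda_{\min}=1$, all off-diagonal entries zero) falsifies the theorem as literally stated. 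The gap you identified is therefore in the paper, not in your attempt. Your proposed salvage for the diagonal entries is also sound: orthogonal diagonalizability with $\lambda_{\min}>0$ makes $\mathbf{A}$ symmetric positive definite, so $\mathbf{A}_{ii}=\mathbf{e}_i^T\mathbf{A}\mathbf{e}_i\geq\lambda_{\min}>0$, which is the strongest pointwise conclusion available.

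Your remark on the downstream use is equally on target. The appendix needs the Noether ratio to vanish, i.e.\ an upper bound on $\max_j d_j^2$ together with divergence (or at least non-vanishing) of $\sum_{j=1}^n d_j^2$ where $d_j=[(\mathbf{X}^T\mathbf{X})^{-1}\mathbf{X}^T]_{1j}$; the paper tries to get the latter from entrywise positivity of the $k\times k$ matrix $n^{-1}\mathbf{X}^T\mathbf{X}$, which its own theorem does not deliver even if it were true, since the relevant sum runs over the $n$ columns of a different ($k\times n$) matrix. Note that $\sum_{j=1}^n d_j^2=[(\mathbf{X}^T\mathbf{X})^{-1}]_{11}$, so a spectral-norm bound of the kind you sketch — $[(\mathbf{X}^T\mathbf{X})^{-1}]_{11}\geq 1/\lambda_{\max}(\mathbf{X}^T\mathbf{X})$ together with the bootstrap evidence that the eigenvalues of $n^{-1}\mathbf{X}^T\mathbf{X}$ stabilize — is indeed the cleaner route to the heuristic the authors intend.
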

\begin{proof}
\begin{equation}
    \norm{\textbf{A}}{}_F = \sqrt{\sum_{i=1}^k \sum_{j=1}^k \left|\textbf{A}_{ij}\right|^2} = \norm{\textbf{U}\boldsymbol{\Lambda}\textbf{U}^T}{}_F = \norm{\boldsymbol{\Lambda}}{}_F = \sqrt{\sum_{i=1}^k\left|\lambda_i\right|^2} \geq \sqrt{k}\lambda_{\text{min}}>0
\end{equation}
\end{proof}
Since the matrix $n^{-1}(\mathbf{X}^T\mathbf{X})$ is a normal matrix, it is also unitarily diagonalizable, therefore its entries can be bounded if an upper and lower bound for its eigenvalues is found. Proofs for bounds of eigenvalues of real data matrices are hard to produce and thus this argument is purely heuristic. As can be seen in the following figures, as we increase $n$ in our data matrix by resampling the matrix's rows via bootstrap, both the maximum and minimum eigenvalue appear to converge to a given value.
\begin{figure}[H]
    \centering
    \includegraphics[width=\textwidth]{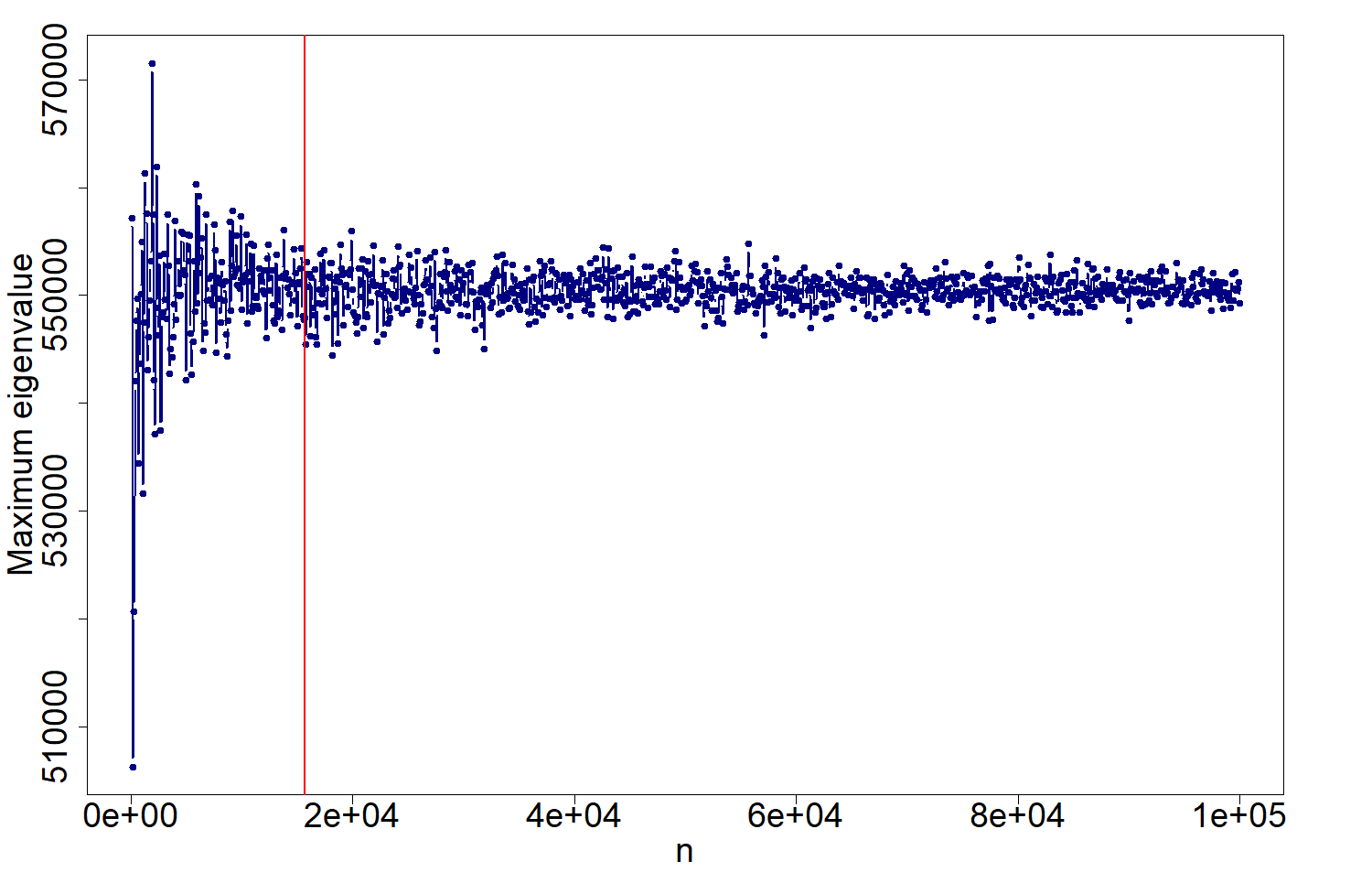}
    \caption{Plot of maximum eigenvalue as $n$ increases. The red line indicates the actual size of the matrix $\mathbf{X}$.}
\end{figure}
\begin{figure}[H]
    \centering
    \includegraphics[width=\textwidth]{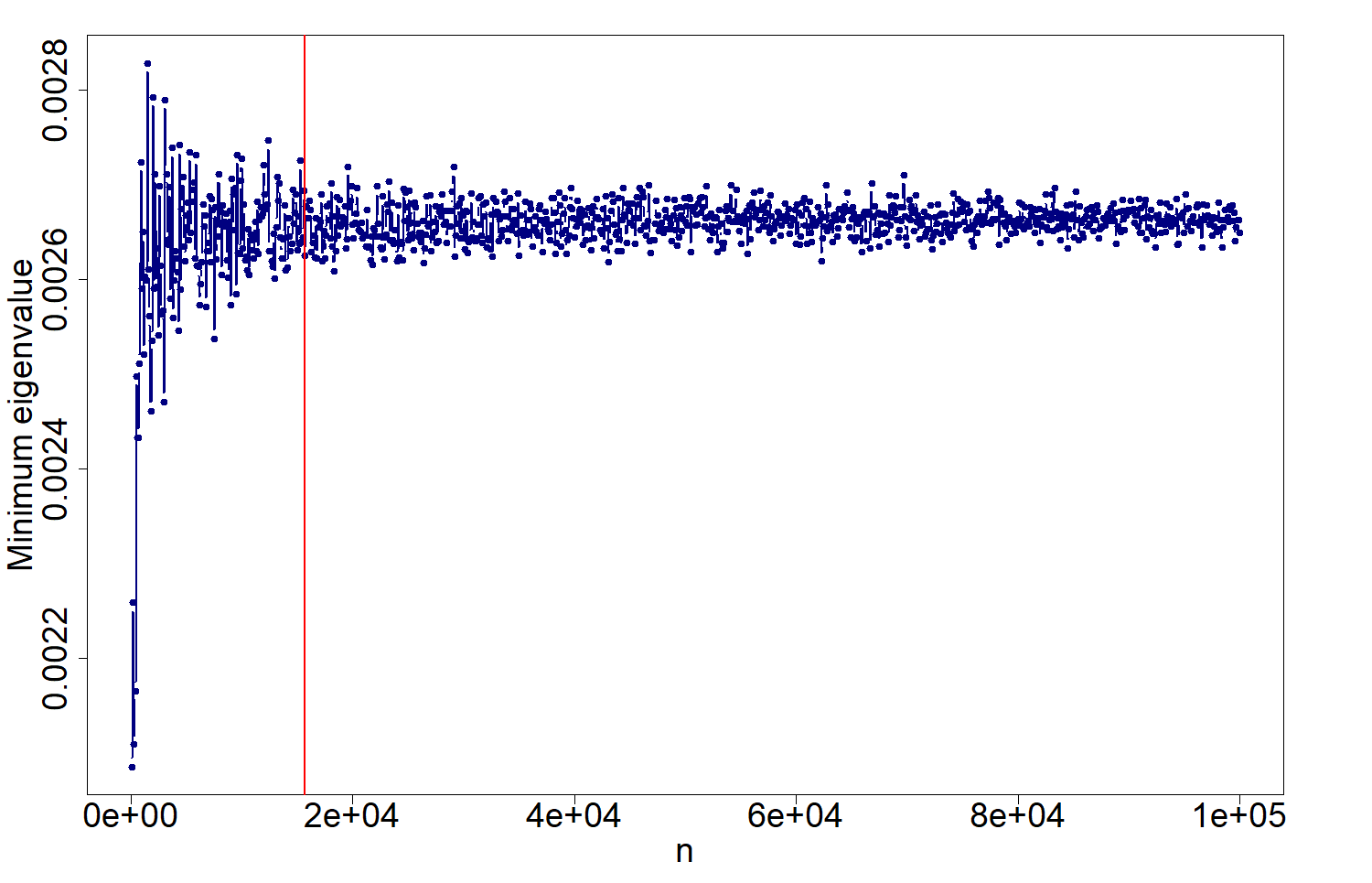}
    \caption{Plot of minimum eigenvalue as $n$ increases. The red line indicates the actual size of the matrix $\mathbf{X}$.}
\end{figure}
These results are considered sufficient for the application of the Hajek-Sidak CLT to the present data matrix.

\section{Scenario Generation for pool prices and solar availability}
\label{Anexo:Scenario}

To generate scenarios for both the pool prices and the solar availability we need a method which allows the generation of a large number of scenarios, as this set will be reduced later by other methods. Thus the following method is proposed. The appropriate data (either the pool price data spanning the years 2013-2020 or the solar availability data spanning the years 2010-2019) is split into a training, validation, test and comparison sets, the validation set being November 2013, the test set being December 2013 as in the rest of the present work, the training set being every date in the year 2013 excluding those in other sets and the comparison set being every date of every other year. For each date in the training set, the three closest dates from the comparison set are found by minimizing the euclidean norm between their respective scenarios. From these dates two separate empirical distributions of distance in days and distance in years are found. These distributions are exemplified in figure \ref{fig:test} for the solar availability data.

\begin{figure}[H]
\centering
\begin{subfigure}{.5\textwidth}
  \centering
  \includegraphics[width=\textwidth]{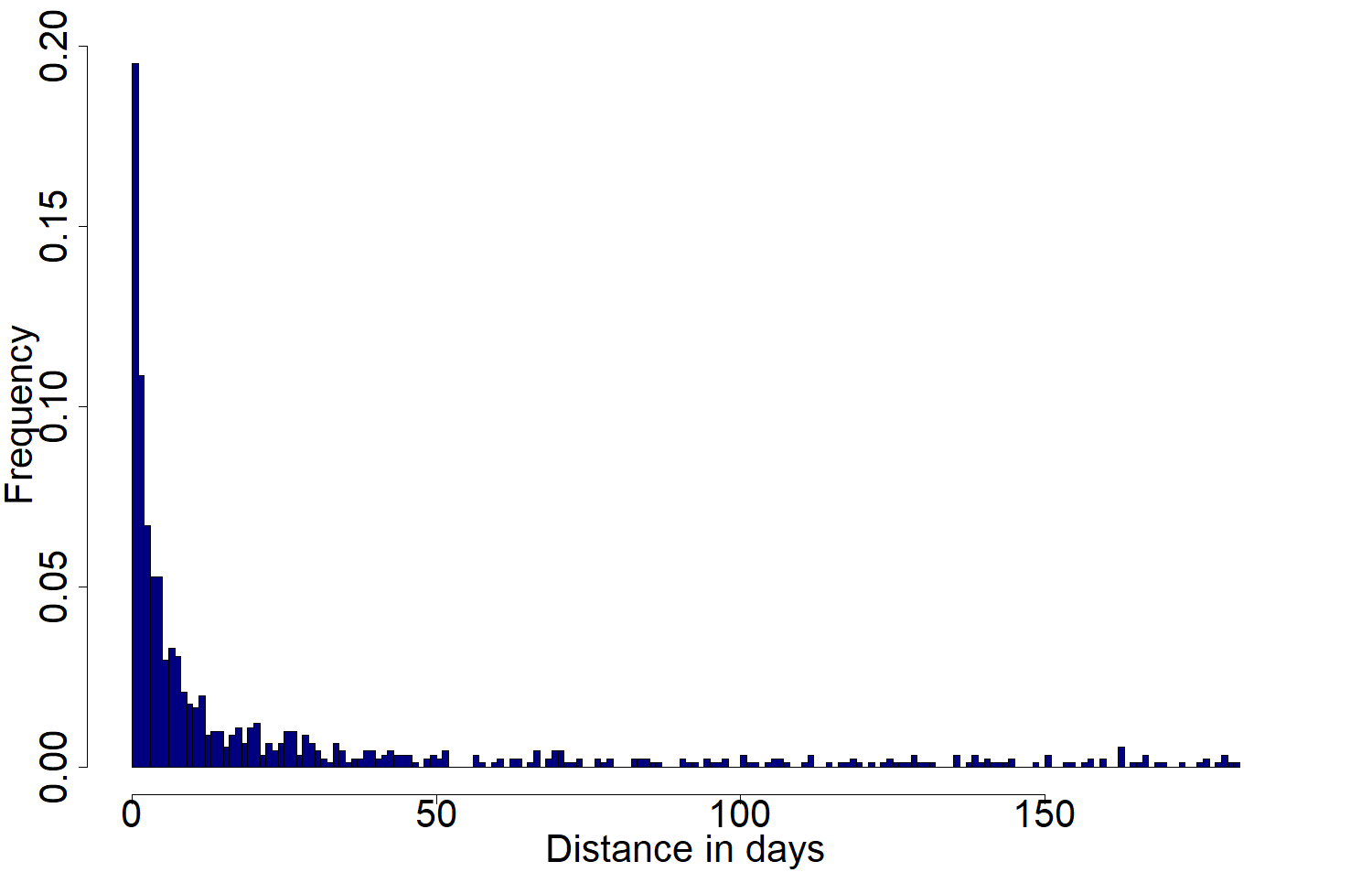}
  \caption{Distribution of distance in days}
  \label{fig:sub1}
\end{subfigure}%
\begin{subfigure}{.5\textwidth}
  \centering
  \includegraphics[width=\textwidth]{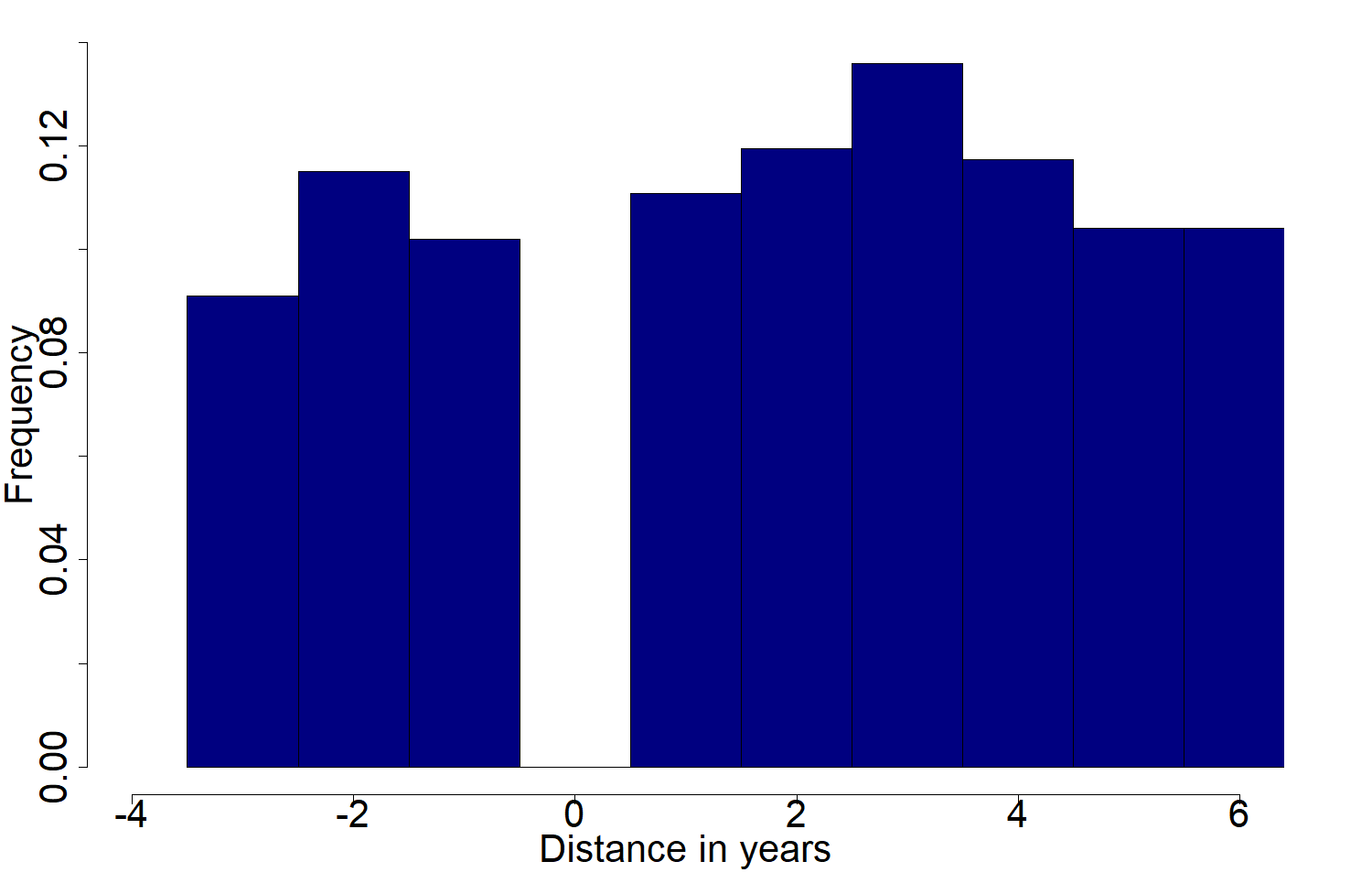}
  \caption{Distribution of distance in years}
  \label{fig:sub2}
\end{subfigure}
\caption{Distributions for the solar availability data. Since 2013 is excluded from the comparison set, a distance of 0 years is impossible.}
\label{fig:test}
\end{figure}

Finally, by sampling from these distributions, scenarios are generated for each date in the validation set by taking the corresponding scenario to the sampled date. In figures \ref{poolprices} and \ref{solaravailability} a comparison between this proposed method, completely random sampling and same date selection from different years is made for both the pool prices and solar availability data. Bars represent an average of euclidean distance between actual realizations for the validation set and the generated scenario.

\begin{figure}
    \centering
    \includegraphics[width=0.9\textwidth]{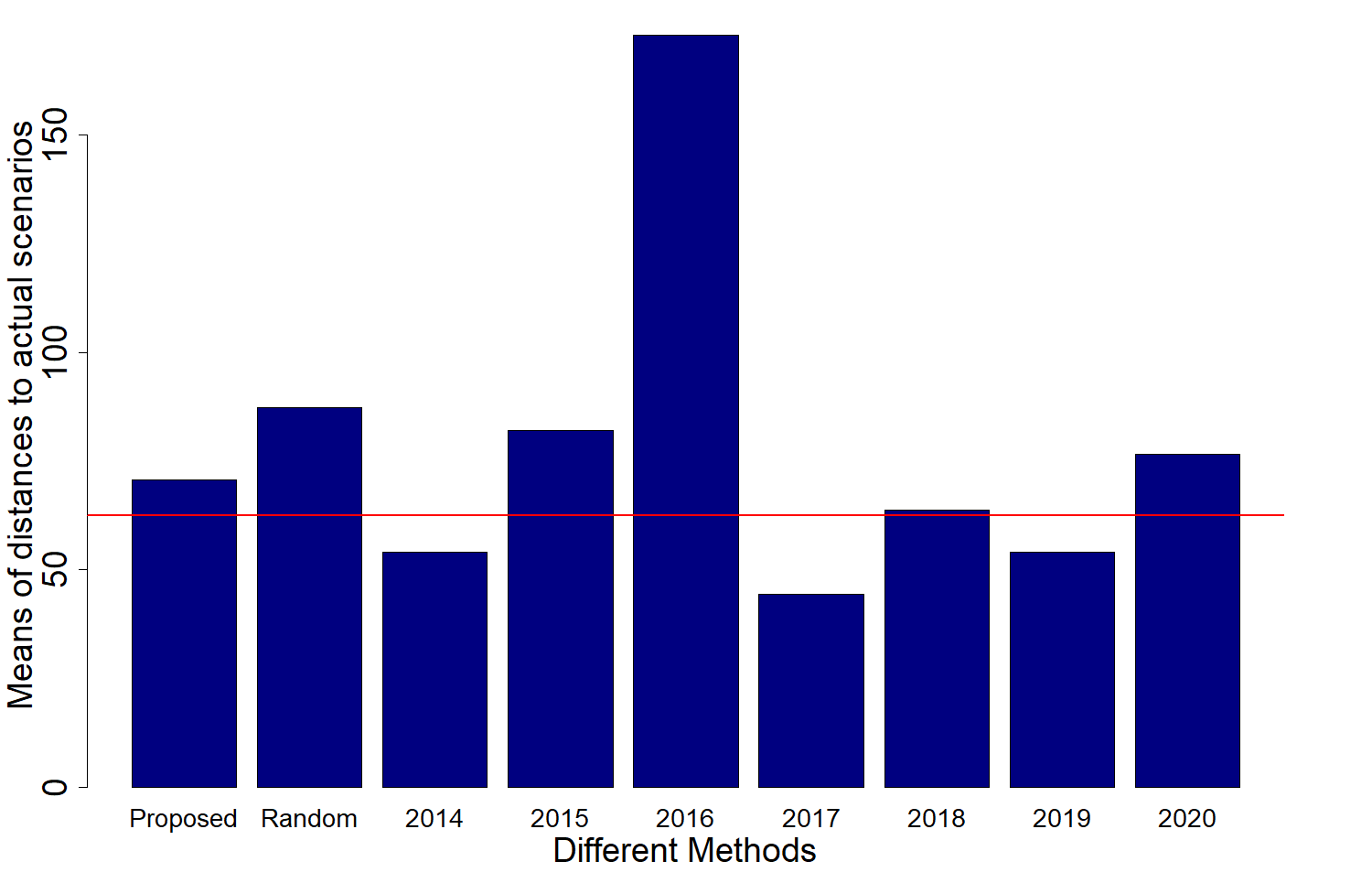}
    \caption{Barplot comparing the different methods for the pool prices data. The red line is the average of the years 2014 through 2020 except 2016 given its anomalous behaviour.}
    \label{poolprices}
\end{figure}
\begin{figure}
    \centering
    \includegraphics[width=0.9\textwidth]{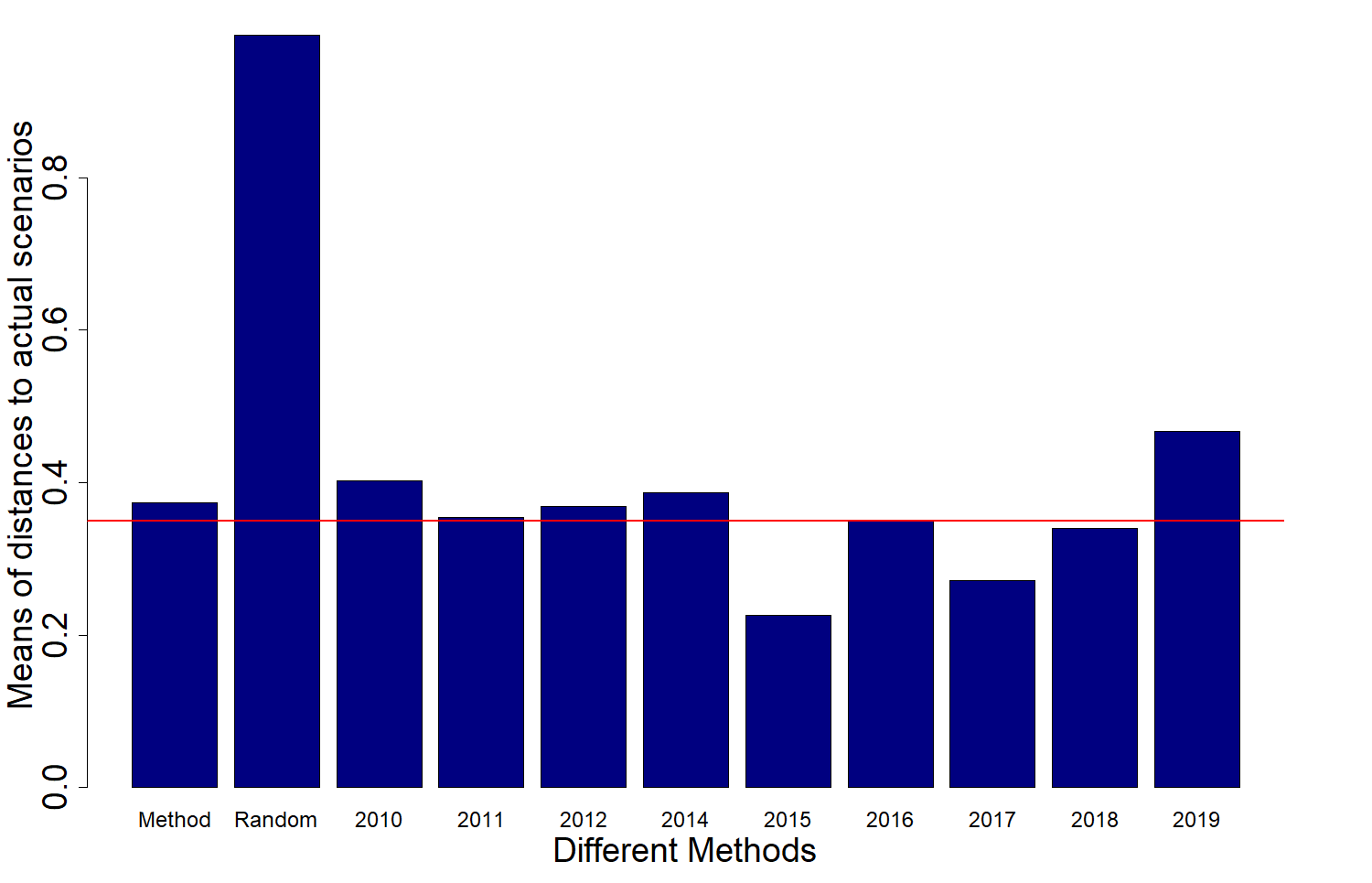}
    \caption{Barplot comparing the different methods for the solar availability data. The red line is the average of the years 2010 through 2019 except 2013 given that its the year used to compare to.}
    \label{solaravailability}
\end{figure}
As can be seen in these figures, while most years perform better than the proposed method, in particular their average, which could also be used to generate scenarios, this type of procedure is limited in the amount of scenarios it can produce (in this case it can only produce a maximum of 6 scenarios for each date for the pool price data and a maximum of 9 scenarios for the solar availability data). This, however, is not a problem for the proposed method which can handle far larger numbers of scenarios while performing noticeably better than the random method. This is why this approach is taken for the generation of pool price scenarios. Finally, the validation set is included in the training set and scenarios are generated for the test set.
\end{appendix}

\end{document}